\newcommand{\Q}{\mathbb Q}
\newcommand{\C}{\mathbb C}
\newcommand{\Z}{\mathbb Z}
\renewcommand{\phi}{\varphi}
\newcommand{\eps}{\varepsilon}
\newcommand{\fraka}{\mathfrak a}
\newcommand{\frakd}{\mathfrak d}
\newcommand{\frakp}{\mathfrak p}
\newcommand{\fraku}{\mathfrak u}
\newcommand{\frakt}{\mathfrak t}
\newcommand{\frakq}{\mathfrak q}
\newcommand{\frakm}{\mathfrak m}
\newcommand{\frakn}{\mathfrak n}
\newcommand{\frako}{\mathfrak o}
\newcommand{\calA}{\mathcal A}
\newcommand{\calN}{\mathcal N}
\newcommand{\bmx}{\left( \begin{matrix}}
\newcommand{\emx}{\end{matrix} \right)}
\newcommand{\Cl}{\mathrm{Cl}}
\renewcommand{\mod}{\bmod}
\newcommand{\leg}{\overwithdelims ()}
\DeclareMathOperator{\End}{End}
\newtheorem{lem}{Lemma}[section]
\newtheorem{prop}[lem]{Proposition}
\newtheorem{thm}[lem]{Theorem}
\newtheorem{cor}[lem]{Corollary}
\newtheorem{conj}{Conjecture}
\numberwithin{table}{section}
\newtheorem{mthm}{Theorem}
\crefname{mthm}{Theorem}{Theorems}
\newtheorem{mcor}[mthm]{Corollary}
\crefname{mcor}{Corollary}{Corollaries}
\crefname{app}{Appendix}{Appendices}
\theoremstyle{remark}
\newtheorem{rem}[lem]{Remark}
\newtheorem{mrem}{Remark}
\crefname{mrem}{Remark}{Remarks}
\theoremstyle{definition}
\numberwithin{equation}{section}
\begin{document}

\title{Rationality fields of CM modular forms}

\author{Kimball Martin}
\address{Department of Mathematics $\cdot$ International Research and Education Center, Graduate School of Science, Osaka Metropolitan University, Osaka 558-8585, Japan}
\email{kimball@omu.ac.jp}
\address{Department of Mathematics, University of Oklahoma, Norman, OK 73019 USA}
\email{kimball.martin@ou.edu}

\date{\today}

\begin{abstract} We formulate a conjecture on the finitude of rationality fields (i.e., Fourier coefficient fields) of newforms of bounded degree, and prove this for CM forms assuming a generalized Riemann hypothesis.  Then we explicitly determine what quadratic and cubic rationality fields occur for weight 2 CM forms, which is related to classifications of CM abelian varieties of GL(2) type.  Several of these fields do not appear in the existing tables of newforms in the LMFDB.
\end{abstract}

\subjclass[2020]{11F11, 11F30, 11G15, 11G18, 11R11}

\maketitle


\section{Introduction}


The motivation for this paper begins with the following conjecture.  For a newform $f = \sum a_n q^n \in S_k(N) = S_k(\Gamma_0(N))$, the rationality field $K_f = \Q((a_n)_n)$ is the number field generated by the Fourier coefficients of $f$.

\begin{conj} \label{conj1} Fix $k \ge 2$ even and $d \ge 1$.  There are only finitely many number fields $K$ of degree $d$ such that $K = K_f$ for some weight $k$ newform $f$ with trivial nebentypus.
\end{conj}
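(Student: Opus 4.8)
The plan is to reduce \Cref{conj1} to a purely field-theoretic finiteness statement and then to attack it through ramification. By the Hermite--Minkowski theorem, for fixed degree $d$ there are only finitely many number fields of bounded absolute discriminant; moreover, for a newform with trivial nebentypus every Fourier coefficient $a_n$ is real and each $\Gal(\overline\Q/\Q)$-conjugate of $f$ is again a trivial-nebentypus newform, so $K_f$ is totally real of degree $d$. It therefore suffices to establish a uniform bound
\[
|\mathrm{disc}(K_f)| \le C(k,d)
\]
depending only on $k$ and $d$, and in particular independent of the level $N$. The entire difficulty is thus transferred to controlling the ramification of the rationality field.

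First I would record that each individual coefficient is rigidly constrained: $a_p \in \calO_{K_f}$ and, by Deligne, all archimedean conjugates of $a_p$ are bounded by $2p^{(k-1)/2}$. Hence for each fixed $p$ the coefficient $a_p$ lies in a finite set $S_p(k,d)$ of totally real algebraic integers of degree at most $d$. Since $[K_f:\Q]=d$ is fixed while $K_f = \Q(a_p : p\ \text{prime})$, the subfields $\Q(a_p)$ cannot be arithmetically independent; they must all sit inside the single degree-$d$ field $K_f$. Already for $d=2$ this forces every nonrational $a_p$ to generate one and the same quadratic field, so that the problem genuinely becomes one of bounding a single discriminant rather than of bounding infinitely many coefficients.

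To bound $\mathrm{disc}(K_f)$ I would analyze which rational primes $\ell$ can ramify in $K_f$, and with what exponent. Working with the Hecke algebra $\mathbb T$ on the new subspace of $S_k(N)$, the field $K_f$ is the residue field of a minimal prime of $\mathbb T$, and $\ell$ ramifies in $K_f$ essentially only when $f$ admits a congruence modulo $\ell$ with one of its own Galois conjugates, i.e.\ when two distinct embeddings $K_f \hookrightarrow \overline\Q_\ell$ coincide modulo a prime above $\ell$. The structured sources of such self-congruences, and of small rationality fields in general, are the extra symmetries of $f$: complex multiplication and, more broadly, inner twists. For CM forms one attaches a Hecke character $\psi$ of an imaginary quadratic field $E$, expresses the $a_p$ through values of $\psi$, and uses class field theory --- made effective by a generalized Riemann hypothesis, which locates a small splitting prime that pins down $K_f$ --- to bound the discriminant; this is the case proved in the paper. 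For non-CM forms possessing inner twists I would run the analogous argument with the Momose--Ribet description of the twisting characters and the associated field, once again reducing control of $K_f$ to class-field-theoretic data of bounded conductor.

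The decisive obstacle is the remaining generic case, of forms with neither CM nor inner twists. Here Maeda-type heuristics predict that the Galois orbit of $f$ is as large as the ambient new space permits, so that any fixed small degree $d$ should be realized only finitely often; but unconditionally there is at present no mechanism to exclude sporadic self-congruences modulo arbitrarily large primes $\ell$, nor to bound the exponent of such $\ell$ in $\mathrm{disc}(K_f)$, since Deligne's bounds constrain the \emph{size} of the conjugates of $a_p$ but say nothing about the \emph{arithmetic} of the field they generate. I expect this uniform control of ramification for twist-generic forms to be the heart of the conjecture, and the reason that only the CM case is settled here.
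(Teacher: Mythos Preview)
The statement you are attempting is \cref{conj1}, which the paper does \emph{not} prove; the paper says explicitly that ``at present, there is no clear strategy to tackle \cref{conj1}, even for $k=2$.'' What the paper establishes is only the restriction to CM forms (\cref{thm:fin}), and even that requires ERH in general. Your proposal is therefore not competing against any proof in the paper, and it is not a proof either: you yourself name the ``decisive obstacle'' in the final paragraph and correctly observe that for twist-generic forms there is no known mechanism to bound the ramification of $K_f$. The Hermite--Minkowski reduction to a uniform bound $|\mathrm{disc}(K_f)|\le C(k,d)$ is a legitimate reformulation, but it is equivalent in difficulty to the conjecture, not a step toward it; and your link between ramification in $K_f$ and self-congruences of $f$ with its Galois conjugates is heuristic rather than a usable criterion. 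So the genuine gap is exactly the one you flag: nothing in your outline controls $\mathrm{disc}(K_f)$ for non-CM, non-inner-twist forms.

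Even in the CM case your sketch does not match what the paper actually does. The paper does not bound $\mathrm{disc}(K_f)$ by using ERH to locate a small splitting prime. Instead it shows (\cref{cor:exp}) that if $f$ has CM by $E$ then the exponent of $\Cl(E)$ divides $(k-1)d$, so under ERH only finitely many $E$ occur; then, for each fixed $E$, \cref{lem:L0} bounds the order $r$ of the modulus character once $[L_\psi:E]$ is bounded, and \cref{prop:vf} confines the value field $L_\psi$ to subfields of a single explicit extension $\tilde E_{r,\ell}$ depending only on $E$, $r$, and $\ell \bmod n$. This is a structural containment argument, not a discriminant estimate, and ERH enters solely through the finiteness of imaginary quadratic fields of given class-group exponent, not through an effective prime.
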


To our knowledge, this conjecture has not been previously formulated in the literature, but it is suggested by existing ideas and conjectures.

First, Coleman's conjecture says that, for a fixed $d$, the set of  isomorphism classes of endomorphism algebras $\End_\Q^0(A)$ of $d$-dimensional simple abelian varieties $A/\Q$ is finite (see \cite{BFGR}).  In particular, if $A_f$ denotes the (simple) abelian variety attached to a weight 2 newform $f$ \`a la Shimura, then $\End_\Q^0(A) \simeq K_f$ by \cite{ribet:korea}.  Hence Coleman's conjecture implies \cref{conj1} when $k=2$.

Next, \cite[Conjecture B+]{me:maeda} predicts that there are only finitely many newforms of squarefree level (and trivial nebentypus) with a degree $d$ rationality field for fixed $k \ge 6$ and $d \ge 1$.  The heuristics behind the conjecture also suggest that there are only finitely many quadratic twist classes of weight $k$ non-CM newforms with rationality field having degree $d$ for large $k$.  This would imply \cref{conj1} for $k \ge 6$ when restricting to non-CM newforms.
(\cref{thm:fin} below will address the CM case.)

Further, \cite{CM} suggests that one reason \cref{conj1} may hold when $k=2$ is that  there may only be finitely many related moduli spaces having non-cuspidal rational points for a fixed $d$.  (When $d=1$, this means that only finitely many modular curves $X_0(N)$ have non-cuspidal rational points, which is known; see \cite{mazur,kenku}.)  In higher weight, newforms do not correspond to abelian varieties but motives.  Philosophically it seems reasonable to posit an analogous geometric finiteness statement about motives which would imply \cref{conj1} for all $k$.

At present, there is no clear strategy to tackle \cref{conj1}, even for $k=2$.  Our first result proves this conjecture when one restricts to CM forms, subject to the Extended Riemann Hypothesis (ERH) for quadratic Dirichlet characters.

\begin{mthm} \label{thm:fin} Fix $k \ge 2$ and $d \ge 1$.  There are only finitely many number fields $K$ of degree $d$ such that $K = K_f$ for some weight $k$ newform $f$ with CM and trivial nebentypus, assuming ERH.  If $k=2$ and $d\le 6$, or if $(k-1)d = 2^n 3^\epsilon$ with $n \in \mathbb N$ and $\epsilon \in \{ 0, 1 \}$, then this holds unconditionally.
\end{mthm}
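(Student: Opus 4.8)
The plan is to realize CM newforms via Hecke characters of imaginary quadratic fields, reduce to a finiteness statement for the underlying fields, and isolate the class‑number input responsible for the ERH hypothesis. Recall that a weight‑$k$ newform $f$ with CM and trivial nebentypus is a theta series $f = f_\psi$ attached to an algebraic Hecke character $\psi$ of an imaginary quadratic field $E$ of infinity type $z\mapsto z^{k-1}$, and triviality of the nebentypus pins down $\psi$ on rational ideals: $\psi((n)) = n^{k-1}\eta_E(n)$, where $\eta_E$ is the quadratic character cut out by $E$. Triviality of the nebentypus also forces each $a_n$ to be fixed by complex conjugation in every embedding, so $K_f$ is a \emph{totally real} field of degree $d$; since $E$ is imaginary we get $E\cap K_f = \Q$, and therefore the field $L := \Q(\psi)$ generated by the values of $\psi$ equals $K_f E$, a CM field with $[L:\Q] = 2d$ and $[L:E] = d$.

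Step one is to turn $[L:E] = d$ into a bound on the arithmetic of $E$. Choosing ideals $\mathfrak a_i$ coprime to the conductor $\mathfrak m$ of $\psi$ and generating $\Cl(E)$, with $\mathfrak a_i^{h_i} = (\beta_i)$, one has $\psi(\mathfrak a_i) \in L$ and $\psi(\mathfrak a_i)^{h_i} = \zeta_i \beta_i^{k-1}$ for roots of unity $\zeta_i$ which also lie in $L$ (being values of the finite part of $\psi$, recoverable as $\psi((\gamma))/\gamma^{k-1}$ using $E\subseteq L$). A Kummer‑theoretic analysis of these radicals shows that $[L:E]$ dominates $h_E$ — more precisely the ring class number of the order attached to $\mathfrak m$, which is $\ge h_E$ and grows with $N\mathfrak m$ — up to a factor controlled by $k$, coming from the interaction of the exponent $k-1$ with the class exponents and with $w_E\in\{2,4,6\}$. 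Hence $h_E \le C(k)\,d$ and $N\mathfrak m \le C'(k,d)$.

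Step two: fix one of the (finitely many, granting the last paragraph) admissible fields $E$. Any two Hecke characters of $E$ of infinity type $z^{k-1}$ with trivial nebentypus differ by an anticyclotomic ray class character $\chi$ (that is, $\chi^\sigma = \chi^{-1}$ for the nontrivial $\sigma\in\Gal(E/\Q)$). Fixing one base character $\psi_0$ of $E$ of this infinity type and nebentypus, and counting Galois conjugates (the conjugates of $f_{\psi_0\chi}$ are the $f_{\psi_0^\tau\chi^\tau}$, and there are exactly $d$ of them), one obtains $\varphi(\mathrm{ord}\,\chi) \le 2d\,[\Q(\psi_0):\Q]$, so every admissible $\chi$ takes values in a single finite group $\mu_M$ with $M = M(k,d)$. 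Consequently every $K_f$ arising from $E$ is contained in the fixed number field $\Q(\psi_0)(\zeta_M)$, which has only finitely many subfields; combining over the finitely many $E$ gives the theorem.

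The one genuinely hard step is the finiteness of the set of admissible $E$ needed above. A clean bound on $h_E$ would give this unconditionally by Heilbronn's theorem, but the bound one actually extracts is weaker — it controls $h_E$ only up to factors forced by genus theory and by the exponent $k-1$ — and converting it into finiteness appears to require effective lower bounds for $L(1,\eta_E)$, i.e.\ ERH for quadratic Dirichlet characters; this is the essential use of the hypothesis. The unconditional clauses substitute explicit input: for $k=2$ and $d\le6$ the bound falls within the range in which imaginary quadratic fields of given class number have been completely determined (Watkins, building on Heegner, Baker, Stark, Goldfeld, Gross--Zagier, Arno, and others), and when $(k-1)d = 2^n3^\epsilon$ the divisibility constraints combine with genus theory to pin $\Cl(E)$ down to a shape for which the admissible fields can again be enumerated. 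Everything else — the Hecke‑character dictionary and the Kummer estimate — is routine; the effective handling of these imaginary quadratic fields is the crux.
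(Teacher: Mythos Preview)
Your overall strategy is right, but Step~1 misidentifies the controlling invariant. The Kummer analysis does \emph{not} give $h_E \le C(k)d$, and your hedge (``controls $h_E$ only up to factors forced by genus theory'') does not name the right quantity either. What the argument actually yields is that the \emph{exponent} of $\mathrm{Cl}(E)$ divides $(k-1)d$: since $(\mathfrak a\,\mathcal O_L)^{k-1} = \psi(\mathfrak a)\,\mathcal O_L$ is principal in $L$ for every ideal $\mathfrak a$ of $E$, taking $\mathcal N_{L/E}$ shows $\mathfrak a^{(k-1)d}$ is principal in $E$. A bound on the exponent is strictly weaker than a bound on $h_E$ --- nothing a priori rules out $\mathrm{Cl}(E) \simeq C_2^g$ with $g$ large --- and finiteness of imaginary quadratic fields of bounded exponent is precisely the content of Boyd--Kisilevsky under ERH, known unconditionally exactly when the exponent is at most $6$ or of the form $2^a$ or $3 \cdot 2^a$ (Heath-Brown). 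This, not Watkins's class-number tables and not genus theory, is the source of both the ERH hypothesis and the exact shape of the unconditional clauses.

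Your claim $N\mathfrak m \le C'(k,d)$ is separately wrong and unnecessary. Quadratic twists of $\psi$ preserve $L_\psi$ while enlarging $\mathfrak m$ without bound, so no such inequality can hold; and the conductor of $\psi$ is an ideal, not the conductor of an order, so the ring-class-number remark is misplaced. Nor is any such bound needed for Step~2: once $E$ is fixed, one shows directly that $L_\psi \subset E(\zeta_r, \zeta_n, \theta_1^{\ell/n}, \dots, \theta_g^{\ell/n})$, where $r$ is the order of the modulus character $\eta_\psi$, $n$ is the exponent of $\mathrm{Cl}(E)$, and the $\theta_i \in E^\times$ depend only on $E$. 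Since $[E(\zeta_r):E] \le d$ bounds $r$ and the containing field depends on $\ell$ only modulo $n$, there are finitely many such fields and hence finitely many possible $L_\psi$, with no reference to $\mathfrak m$ at all.
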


Such an $f$ as in the theorem must have CM by an imaginary quadratic field $E$ with (class group) exponent $n = (k-1)d$ (see \cref{cor:exp}).
The assumption of ERH guarantees that there are only finitely many imaginary quadratic fields of exponent $n$ \cite{BK}.  The latter assertion is known unconditionally when $n \le 6$, or $n$ is a power of 2, or $n$ is 3 times a power of 2 \cite{HB}.  Then we prove \cref{thm:fin} by showing an analogous finiteness statement for the collection of value fields of bounded degree of Gr\"ossencharacters of a fixed field $E$ (see \cref{cor:fin}). 

Now it is natural to ask, for a given $k$ and $d$, what rationality fields actually occur?  This question is considered in \cite{CM}, where the focus is on weight 2 non-CM forms, and for $k=2$ it is related to the existence of rational points on suitable Hilbert modular varieties.  However even for $k=2$ and $d=2$, it is not clear how to even guess which rationality fields might occur.  (The only option for $d=1$ is $K_f=\Q$, and it easy to see it occurs for each $k$.)

Under the assumption of ERH (and in some cases a weaker hypothesis on non-existence of Siegel zeroes), \cite{EKN} enumerates all $E$ with exponent at most 5 or exponent 8.  It is unconditionally known that there is at most one other $E$ for exponent 2, 4 or 8 \cite{weinberger,EKN}.  By determining the possible rationality fields for a given $E$ in the conditionally complete list of exponent $\le 3$ fields, we deduce the following.  

\begin{mthm} \label{thm2} Assuming ERH, the complete set of degree $2$ and $3$ rationality fields $K$ of weight $2$ CM forms are the $23$ quadratic fields and $18$ cubic fields with discriminant $\Delta_K$ listed in \cref{tab:deg2K,tab:deg3K}, respectively.
\end{mthm}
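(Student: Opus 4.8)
The plan is to leverage the structural results already cited: by \cref{cor:exp}, a weight $2$ CM newform with rationality field of degree $d$ must have CM by an imaginary quadratic field $E$ whose class group has exponent $n = (k-1)d = d$. For $d = 2$ and $d = 3$ this means exponent $2$ or exponent $3$, and under ERH the finite list of such $E$ is completely known (via \cite{weinberger, EKN} for exponent $2$ and \cite{EKN} for exponent $3$). So the first step is simply to write down this finite list of candidate fields $E$. For each such $E$, the newforms with CM by $E$ are (up to twist) parametrized by Grössencharacters $\psi$ of $E$ of the appropriate infinity type (weight $2$ means infinity type $z \mapsto z$ on $\mathbb{C}^\times$), with the rationality field $K_f$ being the value field $\Q(\psi)$ of $\psi$ — or rather the field generated by the coefficients $a_p = \psi(\mathfrak p) + \psi(\bar{\mathfrak p})$ for split $p$, together with values at inert and ramified places. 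The task reduces to enumerating, for each $E$ in the list, which fields of degree $2$ or $3$ over $\Q$ arise this way.

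The second step is the concrete computation for each fixed $E$. Having fixed $E$, one must run over the nebentypus-trivial Grössencharacters $\psi$: these are the characters of the idele class group $\mathbb A_E^\times / E^\times$ with the prescribed archimedean component, so they form a torsor under the finite character group of the ray class group of some modulus $\mathfrak m$, and triviality of the nebentypus of $f$ imposes that $\psi|_{\mathbb A_\Q^\times}$ equals the norm character times the quadratic character cutting out $E$. For each admissible conductor (bounded because the level $N$ of $f$ is $|\Delta_E| \cdot \mathrm{Nm}(\mathfrak f_\psi)$ and one only needs finitely many to see every value field), one computes the field generated by the values of $\psi$. This is a finite check: the values of a ray class character of modulus $\mathfrak m$ lie in a cyclotomic field $\Q(\zeta_h)$ where $h = |\mathrm{Cl}_{\mathfrak m}(E)|$, and one intersects/combines with the real subfield structure coming from $a_p = 2\,\mathrm{Re}(\psi(\mathfrak p))$. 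Collecting all resulting degree-$2$ and degree-$3$ subfields over all $E$ and all $\psi$ yields the list; one then records the discriminants $\Delta_K$ and checks there are exactly $23$ quadratic and $18$ cubic fields, populating \cref{tab:deg2K,tab:deg3K}.

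The third step is to verify completeness, i.e.\ that no degree-$2$ or degree-$3$ rationality field has been missed. This has two parts. First, one must confirm that the exponent constraint from \cref{cor:exp} is sharp — that every CM newform of weight $2$ with degree $d \in \{2,3\}$ rationality field genuinely has CM by one of the listed $E$'s (this is where ERH enters, guaranteeing the exponent-$\le 3$ list is complete and not just a lower bound). Second, within each $E$ one must bound the conductor of $\psi$ that needs to be examined: a priori infinitely many twists of a given form share a CM field, so one needs the observation that twisting by a Dirichlet character $\chi$ changes $\Q(\psi)$ only by adjoining the values of $\chi$, hence does not produce new \emph{minimal} rationality fields of degree $\le 3$ beyond finitely many cases — more precisely, that the rationality field of a twist contains that of the original up to controlled cyclotomic extension, so it suffices to check primitive $\psi$ of bounded conductor.

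The main obstacle I expect is precisely this last point: making the reduction to finitely many Grössencharacters per field $E$ fully rigorous and effective. The finiteness of the set of value fields of bounded degree is already guaranteed abstractly by \cref{cor:fin}, but turning that into an \emph{explicit exhaustive} search — pinning down exactly which conductors to test and proving nothing of degree $2$ or $3$ escapes — requires careful bookkeeping of how the value field grows under increasing conductor and under twisting, and of the interaction between the arithmetic of the ray class groups of $E$ and the real-subfield constraint imposed by working with $a_p \in K_f \subset \R$ rather than $\psi(\mathfrak p) \in \C$. The arithmetic-geometric input (Coleman/Shimura correspondence between $A_f$ and $K_f$, CM theory for abelian varieties of $\GL(2)$-type) is standard here; the delicate part is the finite but intricate combinatorics of class characters.
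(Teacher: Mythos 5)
Your overall frame agrees with the paper's: reduce via \cref{cor:exp} to CM fields $E$ of small class-group exponent, use the (ERH-complete) lists of such $E$, and classify the value fields $L_\psi=EK$ of the relevant Gr\"ossencharacters. But the step you yourself flag as the main obstacle --- proving that finitely many, explicitly identifiable $\psi$ suffice, i.e.\ that no conductor produces an unseen degree $2$ or $3$ field --- is exactly the content you are missing, and the paper does \emph{not} resolve it by bounding conductors. The key input is \cref{lem:L0} and \cref{prop:vf}: the value field is $L_\psi=E\bigl(\zeta_r,(u_1\theta_1^\ell)^{1/n_1},\dots,(u_g\theta_g^\ell)^{1/n_g}\bigr)$, where $r$ is the order of the modulus character $\eta_\psi$, the $\theta_i$ generate $\frakt_i^{n_i}$ for class-group generators $\frakt_i$, and $u_i\in\mu_r$. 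Thus $L_\psi$ does not depend on the conductor at all, only on $r$ and finitely many root-of-unity choices; and since $E(\zeta_r)\subseteq L_\psi$ with $[L_\psi:E]=d\le 3$, only $r\in\{1,2,3,4,6\}$ can occur. This converts completeness into a finite check over orders $r$ rather than an ill-controlled search over conductors. The paper then runs a trichotomy: $h_E=1$ (where $L_\psi=E\Q(\zeta_r)$, \cref{sec:h1}); quadratic $\eta_\psi$ with $h_E>1$ (where \cref{prop:mintwist} uses quadratic twisting --- which preserves the value field --- to reduce to the minimal conductor $\frakd_E$, and \cref{cor:Lpsicount} shows each $E$ gives at most one or two value fields, \cref{sec:quadmod}); and higher-order $\eta_\psi$ with $h_E>1$ (where $d$ prime forces $L=E(\zeta_r)$, \cref{sec:last}). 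Note also that the non-existence parts are genuine arguments, not bounded searches: non-cyclic class groups are eliminated via condition \ref{cond:Q2}, and the case $8\mid\Delta_E$, $r=4$ is ruled out for \emph{every} conductor by showing $\theta$ is a square modulo any odd prime of the conductor and cannot have order $4$ at the dyadic place. Your plan of ``check primitive $\psi$ of bounded conductor'' cannot reach these statements, since they require showing no modulus whatsoever works.

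Two smaller corrections. First, \cref{cor:exp} says the exponent of $\Cl(E)$ \emph{divides} $(k-1)d$, not equals it; for $d=2,3$ and $k=2$ you must therefore also include the nine class-number-one fields, which in fact supply a large share of \cref{tab:deg2K} and both of the cyclotomic cubic fields $\Q(\zeta_7)^+$, $\Q(\zeta_9)^+$ in \cref{tab:deg3K} --- as written, your candidate list (exponent exactly $2$ or $3$) would miss all of these. Second, the values of a weight-one Gr\"ossencharacter do not lie in a cyclotomic field $\Q(\zeta_h)$: on a non-principal class they are of the form $\zeta\cdot(\text{an $n_i$-th root of }\theta_i^\ell)$, and it is precisely this algebraic (non-cyclotomic) structure, isolated in \cref{prop:vf}, that both controls the possible $K$ and makes the explicit determination in \cref{tab:quadodd,tab:quadeven,tab:quade3} possible.
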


In \cref{tab:deg2K,tab:deg3K}, along with each $\Delta_K$, we list the discriminants $\Delta_E$ of all $E$ such that $K=K_f$ for some newform $f$ with CM by $E$ (subject to ERH).  In all cases listed, the degree and discriminant of $K$ determine $K$ up to isomorphism. The first two cubic fields $K$ appearing in \cref{tab:deg3K} are $\Q(\zeta_7)^+$ and $\Q(\zeta_9)^+$. 

In \cref{tab:deg2K}, the $E$'s which all have class number 1 or 2.  When $h_E = 2$ the corresponding $K$ is the real quadratic field such that $L=EK$ is the Hilbert class field of $E$, though we do not have a non-computational proof of this fact.  In \cref{tab:deg3K} the $E$'s which appear are the imaginary quadratic fields of class number 3, and the two class number one fields $\Q(\sqrt{-3})$ and $\Q(\sqrt{-7})$.  but when $h_E = 3$, the Hilbert class field of $E$ has no nontrivial totally real subfields so cannot contain $K$. 

Our strategy for \cref{thm2} is, for each $E$ with exponent dividing $d$, to first determine a potential list of degree $d$ rationality fields $K$ using necessary conditions on relevant Gr\"ossencharacters of $E$.  Then for each such $K$ we either construct a Gr\"ossencharacter yielding this $K$ or prove this is impossible by contradiction.  It is convenient to consider Gr\"ossencharacters in 3 cases: (i) when $E$ has class number 1 (\cref{sec:h1}), (ii) when an associated modulus character is quadratic and $h_E > 1$ (\cref{sec:quadmod}), and (iii) when the associated modular character has higher order and $h_E > 1$ (\cref{sec:last}).  

Let us discuss how the rationality fields in \cref{thm2} compare to known tables of low degree rationality fields.  The LMFDB\footnote{\url{https://www.lmfdb.org/}} tabulates low degree rationality fields for all weight 2 newforms of level at most 50000, thanks to the recent work \cite{boo}.

There are 59 real quadratic fields $K$ which occur in this range.  (The first missing fundamental discriminant is 77, and the largest occurring discriminant is $356$.) The two largest discriminants from \cref{tab:deg2K}, namely $489$ and $652$,  do not occur in the LMFDB data.  (Our constructions in \cref{sec:h1d2} produce CM forms $f$ with these rationality fields in levels $106276 = 2^2 \cdot 163^2$ and $239121 = 3^2 \cdot 163^2$.)  We also note that there is exactly one newform $f$ in this range with rationality field $K = \Q(\sqrt{43})$ (i.e., $\Delta_K = 172$), and similarly for $K = \Q(\sqrt{67})$ (i.e., $\Delta_K = 268$).  These forms occur in level $3^2 \cdot 43^2$ and $3^2 \cdot 67^2$, and our constructions in \cref{sec:h1d2} show that these are CM forms.  (CM data for newforms in the LMFDB of level over 10000 have not been computed.)

There are 161 cubic rationality fields in the LMFDB data, with the largest field having discriminant 20613.  Six of the rationality fields $K$ from \cref{tab:deg3K} do not appear in the LMFDB data, namely those $K$ with $\Delta_K = 7641$ or $\Delta_K \ge 10233$.  Construction of CM forms with these 6 rationality fields occur in level $\Delta_E^2$, which ranges from $80089$ for $\Delta_K = 7641$ to $822649$ for $\Delta_K = 24489$ (see \cref{sec:qmce3}).

\begin{mrem}
We expect that with more work \cref{thm2} could be extended to $d = 4, 5, 8$ for weight 2 newforms.  To get complete lists of rationality fields of CM forms for other cases $d$ or $k$, even subject to ERH, would require extending \cite{EKN} to other exponents.  

One reason we only attempted to treat $d=2, 3$ is that for $d=4$ one needs to analyze all exponent 4 fields.  Assuming ERH, there are many more fields of exponent 4 (203 fields) than exponent 2 (56 fields) or exponent 3 (17 fields).  Furthermore, as will be clear from the proof, there are fewer situations to consider when $(k-1)d$ is prime.
\end{mrem}

\begin{table}
\begin{tabular}{c|c  @{\hspace{0.3cm}} || @{\hspace{0.3cm}} c|c @{\hspace{0.3cm}} || @{\hspace{0.3cm}} c|c}
$\Delta_K$ & $\Delta_E$ &  $\Delta_K$ & $\Delta_E$ & $\Delta_K$ & $\Delta_E$  \\
\hline
5   & $-15,-20,-35,-40,-115,-235$ & 29  & $-232$   & 89  & $-267$  \\
8   & $-4,-8,-24,-88$             & 33  & $-11$    	& 129 & $-43$ \\
12  & $-3,-4$                     & 37  & $-148$   	& 172 & $-43$  \\
13  & $-52, -91, -403$                       & 41  & $-123$   & 201 & $-67$  \\
17  & $-51,-187$                  & 44  & $-11$    	& 268 & $-67$  \\
21  & $-7$                         & 57  & $-19$     	 & 489 & $-163$ \\
24  & $-8$                         & 61  & $-427$ 		 & 652 & $-163$  \\
28  & $-7$                        & 76  & $-19$  		  \\
\end{tabular}
\caption{Quadratic rationality fields $K$ for weight 2 forms with CM by $E$}
\label{tab:deg2K}
\end{table}

\begin{table}
\begin{tabular}{c|c  @{\hspace{0.3cm}} || @{\hspace{0.3cm}} c|c @{\hspace{0.3cm}} || @{\hspace{0.3cm}} c|c}
$\Delta_K$ & $\Delta_E$ &  $\Delta_K$ & $\Delta_E$ & $\Delta_K$ & $\Delta_E$  \\
\hline
49& $-7$ & 1593& $-59$ & 8289& $-307$ \\ 
81& $-3$ & 1929& $-643$ & 10233& $-379$ \\ 
321& $-107$ & 2241& $-83$ & 13473& $-499$ \\ 
621& $-23$ & 3753& $-139$ & 14769& $-547$ \\ 
837& $-31$ & 5697& $-211$ & 23841& $-883$ \\ 
993& $-331$ & 7641& $-283$ & 24489& $-907$ \\ 
\end{tabular}
\caption{Cubic rationality fields $K$ for weight 2 newforms with CM by $E$}
\label{tab:deg3K}
\end{table}

Finally, we note the following consequence on the existence of abelian varieties with special properties.  Recall from \cite{shimura:nagoya} that, given a newform $f \in S_2(N)$ with CM by $E$, 
the associated abelian variety $A_f$ is $\overline{\Q}$-isogenous to $\mathcal E^d$ for some elliptic curve $\mathcal E/\Q$ with CM by $E$, where $d = [K_f:\Q]$ .   

\begin{mcor}
For each pair $(E,K)$ occurring in \cref{tab:deg2K} and \cref{tab:deg3K}, there is simple abelian variety $A/\Q$ of dimension $d = [K:\Q]$ with $\Q$-endomorphism algebra $K$ such that $A$ is $\overline{\Q}$-isogenous to a power of an elliptic curve $\mathcal E/\Q$ with CM by $E$.
\end{mcor}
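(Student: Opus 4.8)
The plan is to assemble \cref{thm2} with the Shimura construction and Ribet's endomorphism computation, both already recalled in the introduction. Fix a pair $(E,K)$ appearing in \cref{tab:deg2K} or \cref{tab:deg3K}. By \cref{thm2} and the pairing of $\Delta_K$ with $\Delta_E$ recorded in those tables --- or, more concretely, by the explicit Gr\"ossencharacter constructions that establish it --- there is a weight $2$ newform $f$ with trivial nebentypus, CM by $E$, and rationality field $K_f = K$. Let $A = A_f$ be the abelian variety attached to $f$ \`a la Shimura, so that $\dim A = [K_f:\Q] = [K:\Q] = d$.

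First I would record that $A = A_f$ is $\Q$-simple with $\End_\Q^0(A) \simeq K$. The isomorphism $\End_\Q^0(A_f) \simeq K_f$ is \cite{ribet:korea}; since $K_f = K$ is a field, $\End_\Q^0(A_f)$ is a commutative division algebra. If $A_f$ were $\Q$-isogenous to $B^m$ with $B$ simple over $\Q$ and $m \ge 2$, then $\End_\Q^0(A_f) \simeq M_m(\End_\Q^0(B))$ would be noncommutative, a contradiction; hence $A_f$ is $\Q$-simple of dimension $d$ with $\Q$-endomorphism algebra $K$.

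Next, since $f$ has CM by $E$, the theorem of Shimura \cite{shimura:nagoya} recalled just before the statement produces an elliptic curve $\mathcal E/\Q$ with CM by $E$ such that $A_f$ is $\overline\Q$-isogenous to $\mathcal E^d$. Taking $A = A_f$ then exhibits all of the asserted properties, and the proof is complete.

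I do not expect a genuine obstacle here: every ingredient is either \cref{thm2} itself or a cited structural result of Shimura or Ribet. The one place a word of care is warranted --- the deduction of $\Q$-simplicity of $A_f$ from the fact that its $\Q$-endomorphism algebra is the field $K$ --- is handled by the division-algebra argument above; alternatively one may simply appeal to the standard fact that the abelian variety attached to a single Galois-conjugacy class of newforms is $\Q$-simple.
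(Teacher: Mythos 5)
Your proposal is correct and is essentially the argument the paper intends: the corollary is immediate from combining the explicit existence of a CM newform $f$ with $K_f=K$ and CM by $E$ for each listed pair (the unconditional constructions behind \cref{thm2}, which you rightly invoke rather than the ERH-dependent completeness statement) with Ribet's identification $\End_\Q^0(A_f)\simeq K_f$ and Shimura's result that $A_f$ is $\overline\Q$-isogenous to $\mathcal E^d$ for an elliptic curve $\mathcal E/\Q$ with CM by $E$. Your added division-algebra justification of $\Q$-simplicity is a harmless elaboration of a standard fact the paper takes for granted.
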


In particular, this gives some constructive information about the question: for which totally real fields $K$ does there exist a $d$-dimensional simple abelian variety over $\Q$ with real multiplication (defined over $\Q$) by an order in $K$?

\begin{mrem} A related problem is, for a number field $F$, to classify CM elliptic curves $\mathcal E$ with minimal field of definition $F$, i.e., $F = \Q(j(\mathcal E))$.  For $F$ quadratic, there exist such $\mathcal E$ only for 14 real quadratic fields, namely the 11 real quadratic fields with discriminant $\Delta < 40$, $\Q(\sqrt{41})$, $\Q(\sqrt{61})$ and $\Q(\sqrt{89})$.  Curves for such fields are given in \cite{GLY}.

Note that these 14 real quadratic fields $F$ are a proper subset of the 23 real quadratic fields $K$.  When $\mathcal E/K$ has CM by the maximal order of a class number two field $E$, this follows from the coincidence in \cref{tab:deg2K} between $EK$ and the Hilbert class field mentioned above.
In general, one might wonder to what extent there is overlap between the CM newforms $f$ with quadratic rationality field $K$ and CM elliptic curves $\mathcal E$ with minimal field of definition $F=K$.

 According to the LMFDB, the first 3 weight 2 CM newforms $f$ when ordered by level (LMFDB labels \verb+225.2.a.f+, \verb+256.2.a.e+, \verb+441.2.a.h+)  have respective rationality fields $K = \Q(\sqrt 5)$, $\Q(\sqrt 2)$ and $\Q(\sqrt 7)$ and all correspond to elliptic curves $\mathcal E$ with minimal field of definition $F=K$.  Hence there is some overlap, but this pattern does not persist -- the next CM form $f$ (LMFDB label \verb+484.2.a.d+) has rationality field $K = \Q(\sqrt{33})$, but one can check that its base change to $K$ does not have rational Fourier coefficients, so $A_f$ is not $K$-isogenous to an elliptic curve defined over $F=K$.  Conversely, there are CM elliptic curves over quadratic fields $F$, which do not correspond to modular forms over $\Q$, e.g., the elliptic curve with LMFDB label \verb+4091.1-k1+ is defined over $F=\Q(\sqrt 5)$ and the associated Hilbert modular form is not a base change from $\Q$.
\end{mrem}

\subsection*{Acknowledgments}
I thank \'Alvaro Lozano-Robledo and Sam Frengley for comments.


\section{CM forms}


In this section, we set some notation, recall basic facts about Gr\"ossencharacters and CM forms, and relate the rationality fields of CM forms to the fields of values of  Gr\"ossencharacters.

\subsection{Notation}

For a field $F$, let $\mu_F$ be the group of roots of unity in $F$.  For a finite-degree  field extension $K/F$, denote by $\calN = \calN_{K/F}$ the norm map from $K$ to $F$.  
If $F$ is a number field,  $\Delta_F$ denotes its discriminant, $\frako_F$ its ring of integers, $\Cl(F)$ its ideal class group, and $h_F = \# \Cl(F)$.

For $n\ge 1$, let $\zeta_n$ be a primitive $n$-th root of unity, and write $\mu_n = \langle \zeta_n \rangle$.  We denote the cyclic group of order $n$ by $C_n$.

Throughout, $E/\Q$ will be an imaginary quadratic field, $p$ will denote a prime in $\mathbb N$ and $\frakp$ will denote a prime of $E$ above $p$.  Further, $\frakm$ will denote a nonzero integral ideal of $E$.

For $\alpha \in E^\times$, write $\alpha \equiv 1\mod \frakm$ if $v_\frakp(\alpha -1) \ge v_\frakp(\frakm)$ for each prime $\frakp \mid \frakm$.  (Some authors write $\alpha \equiv 1 \text{ mod}^\times \, \frakm$.). Let $E^\frakm = \{ \alpha \in E^\times : v_\frakp(\alpha) = 0 \text{ for all } \frakp \mid \frakm \}$ and $E_1^\frakm = \{ \alpha \in E^\times : \alpha \equiv 1 \mod \frakm \}$.
Denote by $J^\frakm = J^\frakm(E)$  the group of (invertible) fractional ideals of $E$ relatively prime to $\frakm$, $P^\frakm = P^\frakm(E)$ the subgroup of principal ideals, and $P_1^\frakm = P_1^\frakm(E) = \{ \alpha \frako_E : \alpha \in E_1^\frakm \}$ the subgroup of principal ideals which are $1 \mod \frakm$.  The associated ray class group is $\Cl^\frakm(E) = J^\frakm(E)/P_1^\frakm(E)$.

\subsection{Gr\"ossencharacters and CM forms}
\label{sec:gros-def}

Fix an imaginary quadratic field $E/\Q$, and a nonzero integral ideal $\frakm$ of $E$.  Let $\psi$ be a Gr\"ossencharacter of $E$ with conductor $\frakm$ and weight $\ell \in \Z_{\ge 0}$, which means $\psi: J^\frakm \to \C^\times$ is a character such that
\[ \psi(\alpha \frako_E) = \alpha^\ell \quad \text{if } \alpha \equiv 1 \bmod \frakm. \]
Given $\frakm$ and $\ell$, there exists such a $\psi$ if and only if $\zeta^\ell = 1$ for each $\zeta \in \frako_E^\times \cap E_1^\frakm$.  Note that $\frako_E^\times \cap E_1^\frakm$ is only nontrivial if (i) $2 \in \frakm$ or (ii) $\Delta_E = -3$ and $\calN (\frakm) \in \{ 1, 3 \}$.

The map $E^\frakm/E_1^\frakm \simeq (\frako_E/\frakm)^\times \to P^\frakm/P_1^\frakm$ is surjective with kernel isomorphic to $\frako_E^\times \cap E_1^\frakm$.  We often identify $E^\frakm/E_1^\frakm$ with $(\frako_E/\frakm)^\times$.  Then $\psi$ gives rise to a character $\eta_\psi : (\frako_E/\frakm)^\times \to \C^\times$ via
\[  \eta_\psi(\alpha) = \alpha^{-\ell} \psi(\alpha \frako_E), \]
which we call the \emph{modulus character} of $\psi$.
Replacing $\alpha$ by $u \alpha$ shows that $\eta_\psi(u) = u^{-\ell}$ for any $u \in \frako_E^\times = \mu_E$.  Further, $\eta_\psi$ is trivial on $\frako_E^\times \cap E_1^\frakm$.

Extend $\psi$ to be $0$ on integral ideals not coprime to $\frakm$.
Then the  following result can be found in \cite[Lemma 3]{shimura:nagoya} and \cite[Section 3]{ribet}.
\begin{thm}[Hecke, Shimura] Define
\[ f_\psi = \sum_{\fraka} \psi(\fraka) q^{\calN \fraka}, \] 
where $\fraka$ runs over integral ideals of $E$.  Let $\eta_\psi^\Z$ be the Dirichlet character of $\Z$ mod $M = \calN \frakm$ given by composing $\eta_\psi$ with the natural map $\Z/M\Z \to \frako_E/\frakm$.   Then
\[ f_\psi \in S_{\ell + 1}(|\Delta_E| M, \chi_E \eta_\psi^\Z). \]
If $\psi$ is primitive (see \cref{sec:cond}), then $f_\psi$ is a newform.
\end{thm}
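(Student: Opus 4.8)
The cleanest route is to exhibit $f_\psi$ directly as a finite combination of theta series and then read off modularity, weight, level and nebentypus from the transformation theory of such series. Choose integral ideals $\mathfrak b$ coprime to $\frakm$ representing the classes of $\Cl(E)$; then the integral ideals coprime to $\frakm$ lying in the class of $\mathfrak b^{-1}$ are precisely the $\alpha\mathfrak b^{-1}$ with $\alpha\in\mathfrak b\cap E^\frakm$, taken modulo the action of $\mu_E$. Since $\eta_\psi(\alpha)=\alpha^{-\ell}\psi(\alpha\frako_E)$ gives $\psi(\alpha\mathfrak b^{-1})=\psi(\mathfrak b)^{-1}\eta_\psi(\alpha)\,\alpha^\ell$ and $\calN(\alpha\mathfrak b^{-1})=N_{E/\Q}(\alpha)/\calN(\mathfrak b)$, we obtain
\[ f_\psi = \sum_{\mathfrak b} \psi(\mathfrak b)^{-1}\, \Theta_{\mathfrak b},\qquad \Theta_{\mathfrak b} = \frac{1}{|\mu_E|}\sum_{\alpha\in\mathfrak b\cap E^\frakm}\eta_\psi(\alpha)\,\alpha^\ell\, q^{N_{E/\Q}(\alpha)/\calN(\mathfrak b)}, \]
the summand of $\Theta_{\mathfrak b}$ being $\mu_E$-invariant precisely because $\eta_\psi(u)=u^{-\ell}$ for $u\in\mu_E$. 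Each $\Theta_{\mathfrak b}$ is a theta series attached to the positive-definite primitive integral binary quadratic form $\alpha\mapsto N_{E/\Q}(\alpha)/\calN(\mathfrak b)$ on the lattice $\mathfrak b$ (which has discriminant $\Delta_E$), to the spherical polynomial $\alpha\mapsto\alpha^\ell$ (harmonic since it is holomorphic in $\alpha$), and to a congruence condition modulo $\frakm$. By the classical transformation theory of such theta series (Hecke, Schoeneberg; see \cite{shimura:nagoya}), each $\Theta_{\mathfrak b}$ lies in $M_{\ell+1}(\Gamma_0(N),\chi_E\eta_\psi^\Z)$ for a suitable level $N$, the nebentypus being the product of the quadratic character $\chi_E$ of the form with the character $\eta_\psi^\Z$ descended from the congruence condition. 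Since $\alpha^\ell$ is a nonconstant harmonic polynomial for $\ell\ge 1$, the $\Theta_{\mathfrak b}$ are cusp forms (the weight-one case $\ell=0$ is similar provided $\psi$ is nontrivial), so $f_\psi\in S_{\ell+1}(N,\chi_E\eta_\psi^\Z)$.

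To pin down $N=|\Delta_E|M$ and to obtain the eigenform statement, I would bring in the Hecke $L$-function $L(s,\psi)=\sum_\fraka\psi(\fraka)\calN(\fraka)^{-s}=\sum_{n\ge 1} r_\psi(n)\,n^{-s}$, whose coefficients $r_\psi(n)=\sum_{\calN\fraka=n}\psi(\fraka)$ are the $q^n$-coefficients of $f_\psi$. Unique factorization of ideals gives $L(s,\psi)=\prod_\frakp(1-\psi(\frakp)\calN(\frakp)^{-s})^{-1}$, and collecting the primes of $E$ above each rational prime $p$ rewrites this as $\prod_p\bigl(1-r_\psi(p)\,p^{-s}+\chi_E(p)\eta_\psi^\Z(p)\,p^{\ell-2s}\bigr)^{-1}$; hence $n\mapsto r_\psi(n)$ is multiplicative and satisfies the Hecke recursion at every prime, so $f_\psi$ is a normalized Hecke eigenform with $T_p$-eigenvalue $r_\psi(p)$. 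Hecke's functional equation for $L(s,\psi)$ has an archimedean factor whose shape forces the weight $\ell+1$ and a conductor equal to $|\Delta_E|\calN(\frakm)=|\Delta_E|M$ --- this is the conductor--discriminant identity for $\mathrm{Ind}_E^\Q\psi$, matching the $N$ of the theta computation --- and $L(s,\psi)$ is entire, since it can have a pole only if $\psi$ is a power of $\calN$, which for imaginary quadratic $E$ with $\ell\ge 0$ forces the degenerate case $\ell=0$, $\psi$ trivial. (Alternatively one can bypass the theta series entirely and feed the functional equations of $L(s,\psi)$ and of its twists $L\bigl(s,\psi\cdot(\chi\circ\calN)\bigr)$ by Dirichlet characters $\chi$ into Weil's converse theorem, producing $f_\psi\in S_{\ell+1}(|\Delta_E|M,\chi_E\eta_\psi^\Z)$ in one stroke, its nebentypus being the determinant character of $\mathrm{Ind}_E^\Q\psi$.)

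For the newform assertion, once \emph{primitive} has been defined in \cref{sec:cond} as having conductor exactly $\frakm$, the conductor of $L(s,\psi)$ --- equivalently the level of the eigenform $f_\psi$ --- is then exactly $|\Delta_E|M$ and not a proper divisor: were the level smaller, $f_\psi$ would be an oldform coming from a CM newform of smaller level, which by the same construction arises from a Gr\"ossencharacter of $E$ of conductor properly dividing $\frakm$ and inducing $\psi$, contradicting primitivity. Hence $f_\psi$ is a newform of level $|\Delta_E|M$.

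The main obstacle throughout is the \emph{exact} determination of level and nebentypus: that the level is precisely $|\Delta_E|M$ with nebentypus precisely $\chi_E\eta_\psi^\Z$, rather than a divisor thereof or a quadratic twist. In the theta picture this is the delicate computation of the level of the form $\alpha\mapsto N_{E/\Q}(\alpha)/\calN(\mathfrak b)$ together with the precise contribution $M$ and character $\eta_\psi^\Z$ of the congruence condition modulo $\frakm$; in the $L$-function picture it is the prime-by-prime conductor--discriminant computation for $\mathrm{Ind}_E^\Q\psi$ (the primes ramified in $E$, and those dividing $\frakm$, needing the most care) together with the identification of its determinant character. The remaining points --- multiplicativity, the eigenform property, cuspidality, and \emph{primitive $\Rightarrow$ new} --- are routine once these formulas are in place, which is why the authors simply cite \cite{shimura:nagoya} and \cite{ribet}.
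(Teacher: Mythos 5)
The paper gives no proof of this theorem --- it is quoted from \cite[Lemma~3]{shimura:nagoya} and \cite[Section~3]{ribet} --- so the comparison is with those sources, and your sketch is essentially the standard argument they contain: decompose $f_\psi$ over ideal classes into theta series with spherical harmonic $\alpha^\ell$ and a congruence condition mod $\frakm$, invoke the Hecke--Schoeneberg transformation theory for level and nebentypus, and use the Euler product of $L(s,\psi)$ for the eigenform and newform claims. Your class-by-class parametrization, the $\mu_E$-invariance via $\eta_\psi(u)=u^{-\ell}$, and the degree-two Euler factors $1-r_\psi(p)p^{-s}+\chi_E(p)\eta_\psi^\Z(p)p^{\ell-2s}$ are all correct, and you are right to identify the exact determination of the level $|\Delta_E|M$ and of the nebentypus as the genuinely delicate point; note, however, that your write-up defers exactly that point to the classical transformation theory, which is the content of the result being cited, so as a self-contained proof it is a correct reduction rather than a complete argument. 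Two small corrections: for $\ell=0$ the form $f_\psi$ fails to be cuspidal whenever $\psi$ factors through the norm (i.e.\ $\psi=\chi\circ\calN$ for a Dirichlet character $\chi$), not merely when $\psi$ is trivial --- this is immaterial here since the paper only uses odd $\ell\ge 1$; and in the newform step you should say explicitly that a putative lower-level newform underlying $f_\psi$ again has CM by $E$ (its eigenvalues vanish at almost all inert primes), hence equals $f_{\psi'}$ for a Gr\"ossencharacter $\psi'$ agreeing with $\psi$ at almost all primes, whence $\psi'$ and $\psi$ have the same conductor and the levels coincide.
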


In particular, we have the property $f_\psi \otimes \chi_E = f_\psi$, which means that $f_\psi$ has complex multiplication (CM) by $E$.  Further, any newform with CM by $E$ is of the form $f_\psi$ for some such $\psi$ (\cite[Theorem 4.5]{ribet}).

We are most interested in the case that $\ell$ is odd and $\eta_\psi^\Z = \chi_E$ (as Dirichlet characters mod $|\Delta_E| M$), which corresponds to $f_\psi$ having even weight and trivial nebentypus.  This will imply that $\psi(p\frako_E) = {\Delta_E \leg p} p^\ell$ when $p \nmid \Delta_E$.  Further, the conductor of $\chi_E$ will have to divide that of $\eta_\psi^\Z$, which means that $\Delta_E \mid M$ (see \cref{lem:mincond} for a more precise condition).  The condition $\eta_\psi(-1) = \chi_E(-1) = -1$ implies that  $-1 \not \in \frako_E^\times \cap E_1^\frakm$, and thus $2 \in \frakm$ is impossible.

\subsection{Rationality fields and value fields}

Now we connect rationality fields of CM forms (with trivial nebentypus) with the values of Gr\"ossencharacters.  By the \emph{value field} $L_\psi$ of a Gr\"ossencharacter $\psi$, we mean the subfield of $\C$ generated by $\text{Im}\, \psi$.  Then $L_\psi$ is a number field containing $E$, as it is generated over $E$ by its values on a set of representatives for the ray class group $\Cl^\frakm(E)$.

\begin{lem} Let $f = f_\psi \in S_k(N)$ with rationality field $K = K_f$, and let $L=L_\psi$ be the value field of $\psi$.  Then $L = EK$. 
\end{lem}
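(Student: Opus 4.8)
The plan is to prove the two inclusions $EK \subseteq L$ and $L \subseteq EK$ separately. The first is immediate: the $n$-th Fourier coefficient of $f_\psi$ is $a_n = \sum_{\calN \fraka = n} \psi(\fraka)$, which lies in $L_\psi$ by definition of the value field, so $K = \Q((a_n)_n) \subseteq L$; combined with $E \subseteq L_\psi$ (already noted above) this gives $EK \subseteq L$.

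For the reverse inclusion I would argue on the Galois side: it suffices to show that every $\sigma \in \Gal(\overline{\Q}/EK)$ fixes $L = L_\psi$ pointwise. Fix such a $\sigma$. Since $\sigma$ restricts to the identity on $E$, the function $\psi^\sigma \colon \fraka \mapsto \sigma(\psi(\fraka))$ is again a Gr\"ossencharacter of $E$ of conductor $\frakm$ and weight $\ell = k-1$: for $\alpha \equiv 1 \bmod \frakm$ we have $\psi^\sigma(\alpha \frako_E) = \sigma(\alpha^\ell) = \alpha^\ell$, and more generally $\psi^\sigma$ has the same infinity type $\alpha \mapsto \alpha^\ell$ as $\psi$. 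Because $\sigma$ fixes $K = K_{f_\psi}$, applying $\sigma$ to the $q$-expansion gives $f_\psi = f_\psi^\sigma = \sum_\fraka \sigma(\psi(\fraka)) q^{\calN \fraka} = f_{\psi^\sigma}$. So everything reduces to showing that $f_{\psi^\sigma} = f_\psi$ forces $\psi^\sigma = \psi$: once we have that, $\sigma$ fixes every value $\psi(\fraka)$, hence all of $L_\psi$, so $\Gal(\overline{\Q}/EK) \subseteq \Gal(\overline{\Q}/L)$, i.e.\ $L \subseteq EK$, which together with the first paragraph yields $L = EK$.

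To establish $\psi^\sigma = \psi$, I would consider the quotient $\chi := \psi^\sigma \psi^{-1}$ on $J^\frakm$. Since $\psi^\sigma$ and $\psi$ share the infinity type $\alpha \mapsto \alpha^\ell$, $\chi$ is trivial on $P_1^\frakm$, hence factors through $\Cl^\frakm(E)$ and in particular has finite order, so all its values are roots of unity. Now take a rational prime $p$ coprime to $\frakm$ that splits as $p\frako_E = \frakp \bar\frakp$. From $f_{\psi^\sigma} = f_\psi$ the coefficients at $p$ and $p^2$ agree, which forces both $\psi^\sigma(\frakp) + \psi^\sigma(\bar\frakp) = \psi(\frakp) + \psi(\bar\frakp)$ and $\psi^\sigma(\frakp)\psi^\sigma(\bar\frakp) = \psi(\frakp)\psi(\bar\frakp)$ (the latter because $\psi(\frakp)\psi(\bar\frakp) = \psi(p\frako_E) = a_p^2 - a_{p^2}$, and likewise for $\psi^\sigma$; in the trivial-nebentypus case this is just $\psi(p\frako_E) = p^\ell$, which $\sigma$ fixes). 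Hence $\{\psi^\sigma(\frakp), \psi^\sigma(\bar\frakp)\}$ and $\{\psi(\frakp), \psi(\bar\frakp)\}$ are the roots of a common quadratic, so $\chi(\frakp) \in \{1,\ \psi(\bar\frakp)/\psi(\frakp)\}$. The key point is that $\psi(\bar\frakp)/\psi(\frakp)$ is a root of unity for only finitely many such $p$: otherwise some fixed root of unity would be attained by $\psi(\bar\frakp)/\psi(\frakp)$ for infinitely many $\frakp$, and then, for a fixed reference prime $\frakp_0$ with the same value, the ideals $\frakp\,\bar\frakp^{-1}\bar\frakp_0\frakp_0^{-1}$ would furnish infinitely many distinct elements of $\ker \psi$ (distinct because of unique factorization), contradicting that $\ker \psi$ embeds into the finite group $\Cl(E)$, since $(\alpha) \in \ker\psi$ forces $\alpha^\ell$, and hence $\alpha$, to be a root of unity in $E$, so $(\alpha) = \frako_E$. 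Therefore $\chi(\frakp) = 1$ for all but finitely many split primes $\frakp$, and since every ray class in $\Cl^\frakm(E)$ contains infinitely many prime ideals, $\chi$ is trivial, i.e.\ $\psi^\sigma = \psi$.

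The step I expect to be the real content is this last one — extracting $\psi^\sigma = \psi$ from the identity of $q$-expansions $f_{\psi^\sigma} = f_\psi$, i.e.\ the uniqueness of the Gr\"ossencharacter attached to a CM newform within a fixed infinity type. If one prefers, it can be replaced by a citation to \cite{ribet}: a newform with CM by $E$ determines its Gr\"ossencharacter up to the substitution $\psi \mapsto \psi^c$, $\psi^c(\fraka) = \psi(\bar\fraka)$; but $\psi^c$ has infinity type $\alpha \mapsto \bar\alpha^\ell \neq \alpha^\ell$ (here $E$ imaginary quadratic and $\ell \ge 1$ are used), whereas $\psi^\sigma$ has the same infinity type as $\psi$, so necessarily $\psi^\sigma = \psi$.
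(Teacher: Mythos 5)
Your proof is correct, but it takes a genuinely different route from the paper's. The paper argues one split prime at a time with elementary field theory: for $p\frako_E = \frakp\bar\frakp$, the value $\beta = \psi(\frakp)$ satisfies $X^2 - a_pX + p^\ell$ over $K$, so $[K(\beta):K]\le 2$, while $\beta^{n} = \zeta\pi^\ell$ (with $\frakp^n=\pi\frako_E$) shows a positive power of $\beta$ generates $E$; since the imaginary field $E$ cannot sit inside the totally real $K$, this forces $K(\beta) = EK$, giving both inclusions at once. You instead prove $L\subseteq EK$ by Galois descent, which requires the extra ``multiplicity one'' input that the $q$-expansion of $f_\psi$ together with the infinity type determines $\psi$; you supply a correct self-contained proof of this (the quotient $\chi=\psi^\sigma\psi^{-1}$ has finite order, and your kernel-counting argument rules out $\chi(\frakp)=\psi(\bar\frakp)/\psi(\frakp)\ne 1$ off a finite set). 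Your approach is heavier than the lemma needs, but it is robust and amounts to the standard uniqueness statement for the Gr\"ossencharacter attached to a CM newform. Two small points to tighten: the pigeonhole step should note that the ratios $\psi(\bar\frakp)/\psi(\frakp)$ lie in $L$, so the roots of unity attained range over the finite group $\mu_L$; and to conclude $\chi=1$ you should invoke that every ray class contains infinitely many \emph{degree-one} (hence split) primes, not merely infinitely many primes. Both arguments implicitly use $\ell=k-1\ge 1$.
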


\begin{proof}
Note that $L$ is generated by the values $\psi(\frakp)$, and $K$ is generated by the values $\psi(\frakp) + \psi(\bar \frakp)$, where in both cases $\frakp$ ranges over prime ideals in $\frako_E$.

Say $\frakp$ is a prime above $p \in \Z$.  We know that $\psi(\frakp) \in \Z$ when $p$ is inert or ramified in $E$, so it suffices to consider split primes $p = \frakp \bar \frakp$. 

Since $\psi(\bar \frakp) = \psi(p \frako_E)/\psi(\frakp) = p^\ell/\psi(\frakp)$, we see that $\beta = \psi(\frakp)$ satisfies a quadratic polynomial over $K$, and thus $\beta$ lies in a quadratic extension of $K$.

Say $\frakp$ has order $n$ in $\Cl(E)$, and write $\frakp^n = \pi \frako_E$ for some irreducible $\pi \in E$.  Let $r$ be the multiplicative order of $\pi$ mod $\frakm$.  Then $\beta^{nr} = \psi(\frakp^{n})^r = \pi^{r \ell}$, so $\beta^n = \psi(\frakp^n) = \zeta \pi^\ell$ for some $\zeta \in \mu_r$.  Now any positive power of $\pi$ lies in $E - \Q$ (or else $\frakp$ would ramify over $p$), so some positive power of $\beta$ generates $E$.  However, since $\beta$ was quadratic over $K$ and $E$ must be disjoint from the totally real field $K$, one deduces $\beta \in E K$.  This implies $L \subseteq E K$.

For the converse, note that $L \supseteq K$ is clear.  Now by the previous paragraph, some power of $\beta = \psi(\frakp)$ generates $E$ and thus $L \supseteq EK$.
\end{proof}

Thus we can characterize the rationality field $K$ of $f_\psi \in S_k(N)$ as the unique totally real index 2 subfield of the value field $L$ of $\psi$.  Hence to study rationality fields of CM forms with trivial nebentypus, it suffices to study value fields of Gr\"ossencharacters.


\section{Value fields of Gr\"ossencharacters}


Here we prove some basic properties about value fields of Gr\"ossencharacters and deduce \cref{thm:fin}.
Let $\psi$ be a Gr\"ossencharacter of $E$ of conductor $\frakm$ and weight $\ell$.  Denote by $L = L_\psi$ its value field, and set $d = [L:E]$.

\subsection{Ideal class behavior in value fields}

Note that if $\alpha \frako_E \in P^\frakm$ has order $r$ mod $P_1^\frakm$, then $\psi(\alpha \frako_E) = \zeta \alpha^\ell$ for some $\zeta \in \mu_r$.  Since $\alpha^\ell \in L$, this means that in fact $\zeta \in \mu_L$.

\begin{prop} \label{prop:aL}  For any nonzero ideal $\fraka \subset \frako_E$, the $L$-ideal class of $\fraka \frako_L$ has order dividing $\ell$ in $\Cl(L)$.
\end{prop}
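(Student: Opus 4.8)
The plan is to reduce the statement to the case of a prime ideal $\frakp$ of $\frako_E$ that generates the class group locally, then push the relation $\psi(\frakp^n) = \zeta \pi^\ell$ through the norm-to-$L$ map. More precisely, let $\fraka \subset \frako_E$ be a nonzero ideal, let $n$ be the order of $\fraka$ in $\Cl(E)$, and write $\fraka^n = \pi \frako_E$ with $\pi \in E$. By adjusting $\fraka$ by a principal ideal coprime to $\frakm$ (which changes neither the class of $\fraka$ in $\Cl(E)$ nor the class of $\fraka\frako_L$ in $\Cl(L)$, since $\frako_E \to \frako_L$ sends principal ideals to principal ideals) I may assume $\fraka \in J^\frakm$, so that $\psi(\fraka)$ is defined and nonzero. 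Then $\psi(\fraka)^n = \psi(\fraka^n) = \psi(\pi\frako_E) = \zeta \pi^\ell$ for some $\zeta$ which, by the remark immediately preceding the proposition, lies in $\mu_L$. Hence $\pi^\ell = \zeta^{-1}\psi(\fraka)^n \in L^\times$.

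Next I would translate this into a statement about fractional ideals of $L$. Taking the ideal generated in $\frako_L$ on both sides of $\fraka^n = \pi\frako_E$ gives $(\fraka\frako_L)^n = \pi\frako_L$, and since $\pi^\ell \in L^\times$ we get $(\fraka\frako_L)^{n\ell} = \pi^\ell \frako_L$, which is a principal $L$-ideal. Therefore the class of $\fraka\frako_L$ in $\Cl(L)$ has order dividing $n\ell$. This is weaker than the claimed bound of $\ell$, so the crux is to remove the factor $n$. The key observation is that $\beta := \psi(\fraka)$ itself lies in $L$, and $\beta\frako_L$ is an honest fractional $L$-ideal whose $n$-th power equals $(\fraka\frako_L)^n \cdot \pi^{-1}\frako_L \cdot \pi\frako_L$... more directly: from $(\fraka\frako_L)^n = \pi\frako_L$ and $\pi^\ell = \zeta^{-1}\beta^n$ I want to exhibit a fractional ideal $\mathfrak b$ of $L$ with $\mathfrak b \sim \fraka\frako_L$ and $\mathfrak b^\ell$ principal. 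The natural candidate is to use that $\beta^\ell/\pi^{?}$... Let me instead argue: the ideal $\fraka\frako_L$ has some order $m$ in $\Cl(L)$, and $m \mid n\ell$. Write $\fraka^m\frako_L = \gamma\frako_L$ for $\gamma \in L^\times$. I claim $\gamma$ can be chosen with $\gamma \in E$ up to a unit of $L$: indeed $\fraka^m\frako_L$ is the extension of the $E$-ideal $\fraka^m$, and if $\fraka^m\frako_L$ is principal in $L$ then... this is exactly where capitulation of the class of $\fraka$ in the extension $L/E$ enters, and I expect \textbf{this capitulation/descent step to be the main obstacle}: a priori a nonprincipal $E$-ideal can become principal in $L$, so $m$ could be a proper divisor of $n$, and I must control the "units times $\pi$" ambiguity carefully.

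The cleanest route around the obstacle is probably to avoid $\Cl(L)$ order arithmetic entirely and argue directly: I want to show $(\fraka\frako_L)^\ell$ is principal. Now $(\fraka\frako_L)^{n} = \pi\frako_L$, so $(\fraka\frako_L)^{n\ell} = \pi^\ell\frako_L = \zeta^{-1}\beta^n\frako_L = \beta^n\frako_L = (\beta\frako_L)^n$. Thus $(\fraka\frako_L)^{n\ell}$ and $(\beta\frako_L)^n$ are equal fractional ideals of $L$, so $\bigl((\fraka\frako_L)^\ell \cdot (\beta\frako_L)^{-1}\bigr)^n = \frako_L$, i.e.\ the fractional ideal $\mathfrak c := (\fraka\frako_L)^\ell \cdot (\beta\frako_L)^{-1}$ is an $n$-torsion element of the group of fractional ideals — but the group of fractional ideals is torsion-free, so $\mathfrak c = \frako_L$, giving $(\fraka\frako_L)^\ell = \beta\frako_L$, which is principal. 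Hence the class of $\fraka\frako_L$ has order dividing $\ell$ in $\Cl(L)$, as desired. So in fact the apparent obstacle dissolves once one works with fractional ideals rather than ideal classes: torsion-freeness of the fractional ideal group does the work that "capitulation" seemed to require. I would present the proof in this last form — reduce to $\fraka \in J^\frakm$, invoke the preceding remark to get $\psi(\fraka)^n = \zeta\pi^\ell$ with $\zeta \in \mu_L$, raise to the $\ell$, and compare $(\fraka\frako_L)^{n\ell}$ with $(\psi(\fraka)\frako_L)^n$ inside the torsion-free group of fractional ideals of $L$.
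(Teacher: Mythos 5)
Your final argument is correct and is essentially the paper's own proof: reduce to $\fraka \in J^\frakm$, use the remark before the proposition to write $\psi(\fraka)^n = \zeta\pi^\ell$ with $\zeta$ a root of unity in $L$, and compare $(\fraka\frako_L)^{n\ell}$ with $(\psi(\fraka)\frako_L)^n$ in the torsion-free group of fractional ideals to conclude $(\fraka\frako_L)^\ell = \psi(\fraka)\frako_L$ is principal. The capitulation worry in your middle paragraphs is indeed a red herring, exactly as you conclude, so you should simply present the final form of the argument.
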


When $\ell = 1$, this means that $L$ is a principalization (or capitulation) field for $E$.

\begin{proof} We may assume $\fraka$ is coprime to $\frakm$.  

Consider $\fraka \in J^\frakm$. Say $\fraka$ has order $r$ in $\Cl(E)$, and write $\fraka^r = \alpha \frako_E$, where $\alpha \in E^\times$.    
Since $\psi(\fraka^r) = \eta_\psi(\alpha)\alpha^\ell \in \alpha^\ell \frako_L^\times$, we have
\[ (\psi(\fraka) \frako_L)^r = \psi(\fraka^r) \frako_L = \alpha^\ell \frako_L = (\alpha \frako_L)^\ell = (\fraka \frako_L)^{r \ell}. \]
By unique factorization of ideals, we have $(\fraka \frako_L)^\ell =\psi(\fraka) \frako_L$, which is  principal.
\end{proof}

\begin{cor} \label{cor:exp} The exponent of $\Cl(E)$ divides $\ell d$.
\end{cor}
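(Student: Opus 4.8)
The plan is to deduce \cref{cor:exp} from \cref{prop:aL} together with the elementary fact that the principalization map $\Cl(E) \to \Cl(L)$ has kernel whose order is controlled by $[L:E] = d$. First I would recall the \textbf{capitulation bound}: for any finite extension $L/E$ of degree $d$, the composite of the extension-of-ideals map $j \colon \Cl(E) \to \Cl(L)$ with the norm map $\calN_{L/E} \colon \Cl(L) \to \Cl(E)$ equals multiplication by $d$ on $\Cl(E)$; hence the kernel of $j$ is annihilated by $d$.

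Next I would combine this with \cref{prop:aL}. That proposition says every class in the image of $j$ is killed by $\ell$; equivalently, $\ell$ annihilates $\Cl(E)/\ker j$. Since $d$ annihilates $\ker j$, for any ideal class $[\fraka] \in \Cl(E)$ we have $\ell[\fraka] \in \ker j$, and therefore $d\ell [\fraka] = d\cdot(\ell[\fraka]) = 0$. Thus $\ell d$ annihilates all of $\Cl(E)$, which is exactly the statement that the exponent of $\Cl(E)$ divides $\ell d$.

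The one point requiring a little care is the transfer/capitulation identity $\calN_{L/E} \circ j = [d]$, but this is entirely standard (it follows from the fact that for a prime $\frakp$ of $E$, $\calN_{L/E}(\frakp\frako_L) = \frakp^{\sum_{\frakP \mid \frakp} e(\frakP/\frakp) f(\frakP/\frakp)} = \frakp^d$ by the fundamental identity $\sum e f = d$), so I would cite it rather than prove it. No genuine obstacle arises here; the content of the corollary is really \cref{prop:aL}, and this is just the short group-theoretic extraction of the exponent bound from it. (One could also phrase it without the norm map: $d = [L:E]$ bounds the size of $\ker j$ since $\ker j \hookrightarrow$ a group whose order divides a power of... — but the norm argument is cleanest and gives the sharp annihilator $d$, not merely $h_L$-type bounds.)
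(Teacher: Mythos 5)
Your proof is correct and is essentially the paper's argument: the paper likewise applies \cref{prop:aL} to get $\fraka^\ell \frako_L = \beta\frako_L$ principal and then takes $\calN_{L/E}$, using $\calN_{L/E}(\fraka\frako_L) = \fraka^d$ to conclude $\fraka^{\ell d} = \calN_{L/E}(\beta)\frako_E$ is principal. Your phrasing via the class-group identity $\calN_{L/E}\circ j = [d]$ and the kernel of the capitulation map is just a repackaging of the same norm computation, so no substantive difference.
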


\begin{proof}
For any ideal $\fraka \subset \frako_E$, note that $\calN_{L/E}(\fraka \frako_L) = \fraka^d$.  Now suppose $\fraka$ is nonzero, and thus $\fraka^\ell \frako_L = \beta \frako_L$ for some $\beta \in \frako_L$ by \cref{prop:aL}.  Hence $\fraka^{\ell d} = \calN_{L/E}(\fraka^\ell \frako_L) = N(\beta) \frako_E$ is principal.
\end{proof}

The above results generalize the case of $L=E$ in \cite{schutt}.  They allow us to reduce the problem of classifying degree $d$ rationality fields of CM forms in $S_k(N)$ to considering those with CM by a field $E$ with (class group) exponent dividing $(k-1)d$.

\subsection{Restrictions on value fields}

Now for fixed $E$ and weight $\ell$, we obtain necessary conditions on the possible value fields $L$.

\begin{lem} \label{lem:L0} Let $L_0 \subset L$ be the subfield generated by the values of $\psi$ on principal ideals in $P^\frakm$.  Then $L_0 = E \Q(\zeta_r)$, where $r$ is the order of $\eta_\psi$.
\end{lem}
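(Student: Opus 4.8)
The plan is to compute the subfield $L_0$ generated by $\psi$-values on principal ideals by analyzing the structure of $\psi$ restricted to $P^\frakm$, using the description of the modulus character $\eta_\psi$. First I would recall that for $\alpha \frako_E \in P^\frakm$, we have $\psi(\alpha \frako_E) = \alpha^\ell \eta_\psi(\alpha)$, where $\eta_\psi$ is viewed as a character of $(\frako_E/\frakm)^\times$, so every value on a principal ideal is a product of an $\ell$-th power of an element of $E^\times$ and a value of $\eta_\psi$. Since $\eta_\psi$ has order $r$, its image lies in $\mu_r$, so $L_0 \subseteq E \Q(\zeta_r)$. This gives one inclusion cheaply.

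For the reverse inclusion, the point is to produce, among the values $\psi(\alpha \frako_E)$ with $\alpha \equiv 1 \pmod{\frakm}$ excluded (those just give $\alpha^\ell \in E$), enough genuine roots of unity to generate $\Q(\zeta_r)$ and enough generators of $E$ itself. I would argue that $L_0$ contains $E$: pick any $\alpha \in E_1^\frakm$ not lying in $\Q$ (such exist since $\frakm \ne \frako_E$ forces $E_1^\frakm$ to be Zariski-dense, or just take $\alpha = 1 + \beta m$ for suitable $\beta$ where $m$ generates $\frakm \cap \Z$); then $\psi(\alpha \frako_E) = \alpha^\ell$, and since $\alpha \notin \Q$ while $E/\Q$ is quadratic, $\alpha^\ell$ generates $E$ over $\Q$ (a positive power of a non-rational element of an imaginary quadratic field stays non-rational, as already used in the proof of the previous lemma). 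Hence $E \subseteq L_0$. Then to get $\Q(\zeta_r) \subseteq L_0$, I would use that the image of the composite $E^\frakm \to P^\frakm \to P^\frakm/P_1^\frakm$ is all of $P^\frakm/P_1^\frakm$ (stated in the excerpt), so $\eta_\psi$ really does attain all its $r$ values as $\psi(\alpha\frako_E)/\alpha^\ell$ with $\alpha$ ranging over $E^\frakm$; since $\alpha^\ell \in E \subseteq L_0$ and $\psi(\alpha \frako_E) \in L_0$, each value of $\eta_\psi$ lies in $L_0$, so $\mu_r \subseteq L_0$ and therefore $\Q(\zeta_r) \subseteq L_0$. Combining, $L_0 = E\Q(\zeta_r)$.

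The main obstacle I anticipate is the subtlety in the "$E \subseteq L_0$" step when $\frako_E^\times \cap E_1^\frakm$ is nontrivial or when $\frakm$ is small — one must ensure there genuinely is a principal ideal $\alpha\frako_E$ with $\alpha \equiv 1 \pmod \frakm$ and $\alpha \notin \Q$, and that the identification $E^\frakm/E_1^\frakm \simeq (\frako_E/\frakm)^\times$ is being used correctly (the map to $P^\frakm/P_1^\frakm$ has kernel $\frako_E^\times \cap E_1^\frakm$, so $\eta_\psi$ is a character of $(\frako_E/\frakm)^\times$ trivial on that subgroup, which slightly restricts but does not change the image $\mu_r$). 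I would handle the degenerate case $\frakm = \frako_E$ separately: there $P^\frakm = P_1^\frakm$ is all principal ideals, $\eta_\psi$ is trivial so $r = 1$, and $\psi(\alpha \frako_E) = \alpha^\ell$, whose values generate exactly $E = E\Q(\zeta_1)$, consistent with the claim. A secondary point to be careful about is that $\eta_\psi$'s order $r$ as a character of $(\frako_E/\frakm)^\times$ is the same as the order of $\eta_\psi$ as used in the statement — this is just the definition, so it is not really an obstacle, only something to state cleanly.
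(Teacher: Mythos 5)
Your route is the same as the paper's: both inclusions come from the identity $\eta_\psi(\alpha)=\psi(\alpha\frako_E)/\alpha^{\ell}$, with $E\subseteq L_0$ obtained from values on $P_1^\frakm$, then $\Q(\zeta_r)\subseteq L_0$ by dividing values $\psi(\alpha\frako_E)$ by $\alpha^\ell\in E$, and $L_0\subseteq E\Q(\zeta_r)$ because every value on a principal ideal is a value of $\eta_\psi$ times an $\ell$-th power of an element of $E$. (The paper compresses the first step into the assertion that $\psi(P_1^\frakm)$ generates $E$.)

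The one genuine problem is your justification of $E\subseteq L_0$. The parenthetical claim that a positive power of a non-rational element of an imaginary quadratic field stays non-rational is false: $(\sqrt{-2})^2=-2$, and, relevantly here, for $E=\Q(\zeta_3)$ and $\frakm=\sqrt{-3}\,\frako_E$ (a modulus for which weight $\ell$ Gr\"ossencharacters exist exactly when $3\mid\ell$) the element $\alpha=1+\sqrt{-3}=2\zeta_6$ lies in $E_1^\frakm$, is non-rational, yet $\alpha^3=-8$, so $\alpha^\ell\in\Q$. Thus ``pick any non-rational $\alpha\in E_1^\frakm$'' does not suffice, and the argument you cite from the previous lemma was special to generators of powers of split primes (there $\pi^m\in\Q$ would force $\frakp=\bar\frakp$). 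The repair is easy: $\alpha^\ell\in\Q$ if and only if $\alpha/\bar\alpha$ is a root of unity of order dividing $\ell$. Take $\alpha=1+tm\omega$ with $t\in\Z$, $m\in\frakm\cap\Z$ nonzero, and $\frako_E=\Z[\omega]$; then $\alpha\in E_1^\frakm$, the case $\alpha/\bar\alpha=1$ would mean $\alpha\in\Q$, impossible for $t\ne 0$, and for each fixed $\zeta\in\mu_E\setminus\{1\}$ the condition $\alpha/\bar\alpha=\zeta$ is linear in $t$ with at most one solution. Since $\mu_E$ is finite, all but finitely many $t$ give $\alpha^\ell\notin\Q$, and then $\alpha^\ell$ generates $E$. (If $\ell$ is odd and $\mu_E=\{\pm1\}$, or $E=\Q(i)$, any non-rational $\alpha$ does work.) Note finally that, like the paper's proof, your argument tacitly needs $\ell\ge 1$: for $\ell=0$ the statement itself fails, as your discussion of the degenerate modulus already hints.
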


\begin{proof} First note that $\psi(P^\frakm_1)$ generates $E$.  For any principal ideal in $P^\frakm$, we may scale it by an element of $P^\frakm_1$ so that it is represented by some $\alpha \in \frako_E$.  Then $\eta_\psi(\alpha) = \psi(\alpha\frako_E)/\alpha^\ell \in L_0$.  Conversely, all values of $\eta_\psi$ must also lie in $L_0$, and all values of $\psi$ on $P^\frakm$ must lie in $\eta_\psi((\frako_E/\frakm)^\times) E$.
\end{proof}

For $\alpha \in E^\times$, let $\alpha^{1/n}$ denote some $n$-th root of $\alpha$ in the algebraic closure of $E$.

\begin{prop} \label{prop:vf}
Let $\frakt_1, \dots \frakt_g$ be representatives for a set of generators for $\Cl(E)$.  Let $n_i$ be the order of $\frakt_i$ in $\Cl(E)$ and write $\frakt_i^{n_i} = \theta_i \frako_E$.  Let $r$ be the order of $\eta_\psi$.  Then 
\[ L = L_\psi = E(\zeta_r, (u_1 \theta_1^\ell)^{1/n_1}, \dots, (u_g \theta_g^\ell)^{1/n_g}) \] for some $u_1, \dots, u_g \in \mu_r$ (and some choices of $n_i$-th roots of $u_i \theta_i^\ell$).  In particular, $L \subset \tilde E_{r,\ell} := E(\zeta_r, \zeta_n, \theta_1^{\ell/n}, \dots, \theta_g^{\ell/n})$ where $n$ is the exponent of $\Cl(E)$.
\end{prop}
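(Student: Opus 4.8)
The plan is to show that $L$ is generated over $E$ by (i) the values of $\psi$ on representatives of the ray class group, and (ii) to reduce these to a bounded set of generators using the structure maps $J^\frakm \to \Cl(E)$. First I would recall from the discussion preceding the lemma that $L$ is generated over $E$ by the values $\psi(\fraka)$ as $\fraka$ runs over (ideal classes in) $\Cl^\frakm(E)$, and note that $\Cl^\frakm(E)$ sits in an exact sequence with $\Cl(E)$ on the right and a quotient of $(\frako_E/\frakm)^\times$ on the left. The contribution from the $(\frako_E/\frakm)^\times$-part is exactly governed by the modulus character $\eta_\psi$: by \cref{lem:L0}, the subfield $L_0$ generated by values of $\psi$ on \emph{principal} ideals in $P^\frakm$ equals $E\Q(\zeta_r)$, where $r$ is the order of $\eta_\psi$. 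So $L$ is generated over $E(\zeta_r)$ by one value $\psi(\fraka_i)$ for each generator $\fraka_i$ of $\Cl(E)$; taking $\fraka_i = \frakt_i$, we may use the $\frakt_i$ from the statement.

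Next I would pin down $\psi(\frakt_i)$ up to the ambiguity allowed. Since $\frakt_i^{n_i} = \theta_i \frako_E$, multiplicativity gives $\psi(\frakt_i)^{n_i} = \psi(\theta_i \frako_E) = \eta_\psi(\theta_i)\,\theta_i^\ell$. As $\eta_\psi$ has order $r$, its value $\eta_\psi(\theta_i)$ is a root of unity $u_i \in \mu_r$ (here one should note $\theta_i$ is only well-defined up to $P_1^\frakm$-scaling, but any such rescaling changes $\eta_\psi(\theta_i)\theta_i^\ell$ only by an $n_i$-th power in $E(\zeta_r)^\times$, hence does not change the radical extension generated). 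Therefore $\psi(\frakt_i)$ is \emph{a} chosen $n_i$-th root of $u_i\theta_i^\ell$, and
\[
L = E\bigl(\zeta_r,\ \psi(\frakt_1),\dots,\psi(\frakt_g)\bigr) = E\bigl(\zeta_r,\ (u_1\theta_1^\ell)^{1/n_1},\dots,(u_g\theta_g^\ell)^{1/n_g}\bigr),
\]
which is the first assertion. For the containment in $\tilde E_{r,\ell}$: since $u_i\in\mu_r$, adjoining $\zeta_r$ and $\zeta_{n_i}$ reduces the ambiguity in the $n_i$-th root to a single choice, so $(u_i\theta_i^\ell)^{1/n_i} \in E(\zeta_r,\zeta_{n_i},(\theta_i^\ell)^{1/n_i})$. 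Because $n_i \mid n$, we have $\zeta_{n_i}\in\Q(\zeta_n)$, and any $n_i$-th root of $\theta_i^\ell$ can be taken to be $\zeta_n^{j}\,\theta_i^{\ell/n}$ — more carefully, $n_i \mid n$ means $(\theta_i^{\ell/n})^{n/n_i}$ is an $n_i$-th root of $\theta_i^{\ell}$, so $(\theta_i^\ell)^{1/n_i} \in E(\zeta_n,\theta_i^{\ell/n})$. Hence each generator lies in $\tilde E_{r,\ell}$, giving $L \subset \tilde E_{r,\ell}$.

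The main obstacle I anticipate is the bookkeeping around well-definedness: $\theta_i$ depends on the choice of generator of the principal ideal $\frakt_i^{n_i}$, the element $\psi(\frakt_i)$ depends on which $n_i$-th root one selects, and $\theta_i^{\ell/n}$ only makes literal sense once one commits to roots — so the cleanest formulation is to phrase everything in terms of "there exist choices of roots such that equality holds," and to verify that changing $\theta_i$ by a unit or by an element of $E_1^\frakm$ only scales $u_i\theta_i^\ell$ by an $n_i$-th power of an element of $E(\zeta_r)$, which is invisible to the generated field. A second, minor point is to justify that $L_0 = E(\zeta_r)$ really lets us restrict to the $g$ generators $\frakt_i$ rather than all of $\Cl^\frakm(E)$: this follows since any ideal in $J^\frakm$ is, modulo $P^\frakm$, a product of powers of the $\frakt_i$, and $\psi$ of an element of $P^\frakm$ lies in $E(\zeta_r)L_0 = E(\zeta_r)$ by \cref{lem:L0}, while $\psi$ is multiplicative — so $\psi(\fraka)$ for general $\fraka$ lies in the field generated by $E(\zeta_r)$ and the $\psi(\frakt_i)$.
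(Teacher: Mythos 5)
Your proof of the displayed equality is essentially the paper's argument: invoke \cref{lem:L0} to identify the subfield coming from principal ideals as $E(\zeta_r)$, use multiplicativity and the surjection $J^\frakm \to \Cl(E)$ to reduce to one value per class-group generator, and observe that $\psi(\fraka_i)^{n_i} = \eta_\psi(\alpha_i)\alpha_i^\ell$ with $\eta_\psi(\alpha_i)\in\mu_r$. The only bookkeeping difference is how the coprimality of the $\frakt_i$ to $\frakm$ is handled: the paper simply takes $\fraka_i = \gamma_i\frakt_i \in J^\frakm$ and divides out $\gamma_i^\ell \in E$, which is the clean version of your rescaling remark. Note that the relevant rescaling of $\theta_i$ is by $v\gamma^{n_i}$ with $v\in\frako_E^\times$ and $\gamma\in E^\times$ arbitrary (prime to $\frakm$), and the factor picked up is $(\eta_\psi(\gamma)\gamma^\ell)^{n_i}$, an $n_i$-th power in $E(\zeta_r)$; rescaling ``by an element of $E_1^\frakm$'' is not the relevant operation, since that changes the ideal $\frakt_i^{n_i}$ itself.

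For the final containment $L\subset\tilde E_{r,\ell}$ your argument has a genuine gap: the step ``since $u_i\in\mu_r$, adjoining $\zeta_r$ and $\zeta_{n_i}$ reduces the ambiguity\dots so $(u_i\theta_i^\ell)^{1/n_i}\in E(\zeta_r,\zeta_{n_i},(\theta_i^\ell)^{1/n_i})$'' silently requires an $n_i$-th root of $u_i$ to lie in $E(\zeta_r,\zeta_{n_i})$, which is false in general: for $r=n_i=2$ and $u_i=-1$ one needs $i$. This is not a purely cosmetic issue. For example, with $E=\Q(\sqrt{-10})$, $\ell=1$ and $\eta_\psi$ quadratic, the paper's own computation in \cref{sec:quadexp2} gives $u_1=-1$ for the generator $\theta_1=-3-2\sqrt{-10}$ of $\frakp_7^2$, so $L=E(\sqrt{-\theta_1})=\Q(\sqrt5,\sqrt{-2})$, whereas $E(\zeta_2,\zeta_2,\theta_1^{1/2})=E(\sqrt{\theta_1})=\Q(\sqrt2,\sqrt{-5})$, which does not contain $L$; the inclusion holds only for the other choice of generator $3+2\sqrt{-10}$. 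So the containment as you (and, implicitly, the paper, which asserts the ``In particular'' without proof) state it is only valid after a compatible choice of the generators $\theta_i$ (they are defined only up to $\frako_E^\times$), or after enlarging the ambient field, e.g.\ to $E(\zeta_{rn},\theta_1^{\ell/n},\dots,\theta_g^{\ell/n})$, which also repairs the claim that it is independent of choices. Either fix is harmless for the only downstream use (\cref{cor:fin}), since finitely many fields arise in any case, but your write-up should supply one of them rather than the invalid deduction above.
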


Note that $\tilde E_{r,\ell}$ really only depends on $E$, $r$ and $\ell$, and not on the choices of $\frakt_i$ or $\theta_i$ or $n_i$-th roots.

\begin{proof}
For $1 \le i \le g$, select $\fraka_i = \gamma_i \frakt_i \in J^\frakm$ for some $\gamma_i \in E^\times$.  Then $\fraka_i^{n_i} = \alpha_i \frako_E$, where $\alpha_i = \gamma_i^{n_i} \theta_i$.  From \cref{lem:L0}, the values of $\psi$ on principal ideals generates $L_0 = E(\zeta_r)$.  Hence $L_\psi = L_0(\psi(\fraka_i), \dots, \psi(\fraka_g))$.
Note that $\psi(\fraka_i)$ is an $n_i$-th root of $\eta_\psi(\alpha_i)\alpha_i^\ell = \eta_\psi(\alpha_i)\theta_i^\ell \gamma_i^{n_i \ell}$.  Hence $L_0(\psi(\fraka_i)) = L_0((\eta_\psi(\alpha_i) \theta_i^\ell)^{1/n_i})$.
\end{proof}

\begin{rem} \label{rem:Leta}
 Fix $E, \ell$ and $r$ as above.  From the proof we see that $L_\psi$ only depends on $\eta_\psi$ and the choice of certain $n_i$-th roots of unity.  If $\Cl(E)$ is cyclic, there is only one $n_i$ and any choice of an $n_i$-th root will give the same $L_\psi$ up to isomorphism.  If $n \mid r$, then the choice of root of unity does not affect the value field.  Hence if either $n \mid r$ or $\Cl(E)$ is cyclic, then $\eta_\psi$ determines $L_\psi$ up to isomorphism. 
\end{rem}

Once we bound the degree of the value field $L_\psi$, there are finitely many possibilities for the order of $\eta_\psi$ (without restriction on $\ell$ or $\frakm$) by \cref{lem:L0}.  Noting that the field $\tilde E_{r,\ell}$ in the proposition does not depend upon the conductor, and only depends on $\ell \mod n$, we deduce the following.

\begin{cor} \label{cor:fin}
Given $E$, $d \ge 1$, there are finitely many extensions $L/E$ of degree $d$ which are value fields of Gr\"ossencharacters of $E$ (for arbitrary weights and conductors).
\end{cor}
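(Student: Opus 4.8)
The plan is to combine \cref{prop:vf} with \cref{lem:L0} and a finiteness count on the order $r$ of the modulus character $\eta_\psi$. The key observation is that \cref{prop:vf} exhibits every value field $L_\psi$ as a subfield of the explicit field $\tilde E_{r,\ell} = E(\zeta_r, \zeta_n, \theta_1^{\ell/n}, \dots, \theta_g^{\ell/n})$, which depends only on $E$, on $r$, and on the residue $\ell \bmod n$ (where $n$ is the exponent of $\Cl(E)$, a fixed quantity once $E$ is fixed). So if we can show that only finitely many pairs $(r, \ell \bmod n)$ are relevant to value fields of degree $d$, then there are only finitely many possible $\tilde E_{r,\ell}$, hence finitely many subfields of degree $d$, and we are done.

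First I would fix $E$ and $d$ and let $\psi$ be any Gr\"ossencharacter of $E$ with value field $L = L_\psi$ of degree $d$ over $E$. By \cref{lem:L0}, the subfield $L_0 = E\Q(\zeta_r) \subseteq L$, so $[\Q(\zeta_r):\Q] = \phi(r)$ divides $[L:\Q] = 2d$. Since $\phi(r) \to \infty$, there are only finitely many possible values of $r$. Next, observe that the exponent $n$ of $\Cl(E)$ is a fixed integer depending only on $E$, so there are only finitely many residues $\ell \bmod n$; and as noted after \cref{prop:vf}, the field $\tilde E_{r,\ell}$ depends on $\ell$ only through $\ell \bmod n$ (since $\theta_i^{\ell/n}$ and $\zeta_n^\ell$ depend only on $\ell \bmod n$ — here $\theta_i^{\ell/n}$ should be read as a fixed choice of $n$-th root raised to the power $\ell$, so replacing $\ell$ by $\ell + n$ multiplies by $\theta_i$, which is already in $E$). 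Therefore, as $(r, \ell \bmod n)$ ranges over this finite set, we obtain only finitely many fields $\tilde E_{r,\ell}$.

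Finally I would invoke \cref{prop:vf} to conclude: any value field $L$ of degree $d$ satisfies $L \subseteq \tilde E_{r,\ell}$ for one of these finitely many fields $\tilde E_{r,\ell}$, each of which is a fixed number field of finite degree over $\Q$. A number field has only finitely many subfields, so altogether there are only finitely many candidates for $L$. This proves the corollary.

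I do not anticipate a serious obstacle here, since all the heavy lifting has been done in \cref{prop:vf} and \cref{lem:L0}; the corollary is essentially a packaging of those results together with the elementary fact that $\phi(r)$ is bounded once it divides a fixed integer. The one point requiring a little care is the claim that $\tilde E_{r,\ell}$ genuinely depends only on $\ell \bmod n$ and not on auxiliary choices (the generators $\frakt_i$, the elements $\theta_i$, or the chosen $n$-th roots), but this is already asserted in the remark immediately following \cref{prop:vf}, so it may simply be cited.
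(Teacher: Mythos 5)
Your proposal is correct and follows essentially the same route as the paper: the paper also deduces the corollary by combining \cref{lem:L0} (bounding the order $r$ of $\eta_\psi$ once $[L:E]=d$ is fixed) with the containment $L \subseteq \tilde E_{r,\ell}$ from \cref{prop:vf} and the observation that $\tilde E_{r,\ell}$ depends only on $\ell \bmod n$ and not on the conductor. Your extra care about the independence of auxiliary choices and the finiteness of subfields of a number field just makes explicit what the paper leaves implicit.
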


This, together with \cref{cor:exp} and the finiteness results about imaginary quadratic fields with given exponent mentioned in the Introduction, proves \cref{thm:fin}.

\section{Minimal conductors} \label{sec:cond}

In order to classify CM newforms with degree $d$ rationality field (assuming a classification of $E$ with given exponent), we would like to be able to do the following.
Given $E$, $d$ and $\ell$, classify: (i) degree $2d$ value fields $L$ of weight $\ell$ Gr\"ossencharacters $\psi$ such that $\eta_\psi^\Z = \chi_E$, and (ii) all Gr\"ossencharacters $\psi$ with a given value field $L$ such that $\eta_\psi^\Z = \chi_E$.  While (i) is our goal in this paper, we will say some things about (ii).  In fact a large part of (ii) is determining what conductors $\frakm$ are possible, and this is useful for (i).  Problem (ii) was taken up in \cite{schutt} when $L=E$, which corresponds to $K=\Q$.  In this section we establish some results about what conductors need to be considered.

Let $\psi$ be a Gr\"ossencharacter of $E$ with weight $\ell$ and conductor $\frakm$.  

We say that $\psi$ (resp.\ $\eta_\psi$) is primitive of conductor $\frakm$ if there is no integral ideal $\frakm'$ properly dividing $\frakm$ such that $\psi$ extends to a Gr\"ossencharacter of conductor $\frakm'$ (resp.\ $\eta_\psi$ factors through $(\frako_E/\frakm')^\times$).  For our classification problems, it clearly suffices to restrict to primitive $\psi$.  The following says that $\psi$ is primitive if and only if $\eta_\psi$ is primitive.

\begin{lem} \label{lem:psi-extend}
Suppose $\frakm'$ is a nonzero integral ideal of $E$ such that $\frakm' \mid \frakm$ and that $\eta_\psi$ factors through $(\frako_E/\frakm')^\times$.  Then $\psi$ is the restriction to $J^\frakm$ of a Gr\"ossencharacter $\psi'$ with conductor $\frakm'$ such that $\eta_{\psi'} = \eta_\psi$ and $L_{\psi'} = L_{\psi}$.
\end{lem}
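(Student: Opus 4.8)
The plan is to build $\psi'$ on the larger group $J^{\frakm'}$ by exhibiting a consistent "recipe" that reproduces $\psi$ on $J^\frakm$ and has the correct infinity type. The natural candidate is forced: for a prime $\frakp \nmid \frakm'$, if $\frakp \in J^\frakm$ (i.e.\ $\frakp \nmid \frakm$) we must set $\psi'(\frakp) = \psi(\frakp)$; for the finitely many primes $\frakp$ with $\frakp \mid \frakm$ but $\frakp \nmid \frakm'$, we need to choose a value. Since we want $\psi'(\alpha\frako_E) = \alpha^\ell$ for $\alpha \equiv 1 \bmod \frakm'$ and $\psi'(\alpha\frako_E) = \eta_\psi(\alpha)\alpha^\ell$ for general $\alpha \in E^{\frakm'}$ (using that $\eta_\psi$ makes sense on $(\frako_E/\frakm')^\times$ by hypothesis), the cleanest approach is to \emph{define} $\psi'$ directly via the modulus character. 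First I would recall that a Gr\"ossencharacter of conductor dividing $\frakm'$ and weight $\ell$ with prescribed modulus character $\eta$ on $(\frako_E/\frakm')^\times$ amounts to choosing a character of the ray class group $\Cl^{\frakm'}(E)$ compatible with $\eta$ under the exact sequence
\[
(\frako_E/\frakm')^\times \longrightarrow \Cl^{\frakm'}(E) \longrightarrow \Cl(E) \longrightarrow 1,
\]
together with the infinity-type constraint; equivalently $\psi'$ is determined by $\eta$ on principal ideals and by arbitrary choices of its values on a set of generators of $\Cl(E)$ subject to $\psi'(\frakt_i^{n_i}) = \psi'(\theta_i\frako_E)$.

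The key steps, in order: (1) Observe that since $\psi$ already exists with the given $\eta_\psi$ factoring through $(\frako_E/\frakm')^\times$, the compatibility obstruction for building \emph{some} $\psi'$ of conductor dividing $\frakm'$ with $\eta_{\psi'} = \eta_\psi$ automatically vanishes — indeed $\psi$ restricted to principal ideals in $P^{\frakm}$ is already given by $\alpha \mapsto \eta_\psi(\alpha)\alpha^\ell$, and this formula extends verbatim to $P^{\frakm'}$ because $\eta_\psi$ is defined mod $\frakm'$ and, crucially, $\frako_E^\times \cap E_1^{\frakm'} \subseteq \frako_E^\times \cap E_1^{\frakm}$ means $\eta_\psi$ still kills the units that could obstruct well-definedness (here one uses $\eta_\psi(u) = u^{-\ell}$ for $u \in \mu_E$, already recorded in \cref{sec:gros-def}). (2) For the non-principal part, define $\psi'(\frakt_i) := \psi(\fraka_i)$ where $\fraka_i = \gamma_i\frakt_i \in J^\frakm$ is any ideal coprime to $\frakm$ representing $\frakt_i$, renormalized by $\gamma_i^{-\ell}\eta_\psi(\gamma_i)^{-1}$ so that the value depends only on the class; check this is independent of the choice of $\fraka_i$ using step (1). (3) Extend multiplicatively to all of $J^{\frakm'}$; well-definedness is exactly the relation $\psi'(\frakt_i)^{n_i} = \psi'(\theta_i\frako_E)$, which holds because the analogous relation holds for $\psi$ on $J^\frakm$. (4) Verify $\psi'$ has weight $\ell$ (immediate from the principal-ideal formula), that $\psi'|_{J^\frakm} = \psi$ (immediate on primes coprime to $\frakm$, hence everywhere), and that $\eta_{\psi'} = \eta_\psi$ (by construction on principal ideals). (5) Finally $L_{\psi'} = L_\psi$: the inclusion $L_\psi \subseteq L_{\psi'}$ is clear since $\psi'$ extends $\psi$; the reverse follows because by \cref{prop:vf} the value field depends only on $E$, $\ell$, $\eta_\psi$ (equivalently $r$), and the choice of $n_i$-th roots of unity packaging the $\psi(\frakt_i)$ — and these data are literally shared between $\psi$ and $\psi'$, since $\psi'(\frakt_i)$ is a renormalization of $\psi(\fraka_i)$ by an element of $E(\zeta_r) \subseteq L_\psi$.

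I expect the main obstacle to be step (1)–(3): namely checking that the recipe for $\psi'$ is genuinely well-defined as a character on $J^{\frakm'}$ rather than just a function on generators. The subtlety is the interaction between the unit group $\frako_E^\times$ and the change of modulus — one must confirm that no new relations among ideal classes are introduced when passing from $\frakm$ to $\frakm'$ that $\eta_\psi$ would fail to respect. This is where the hypothesis that $\eta_\psi$ \emph{factors through} $(\frako_E/\frakm')^\times$ (and not merely that $\frakm' \mid \frakm$) does the essential work: it guarantees the character of $P^{\frakm'}/P_1^{\frakm'}$ induced by $\alpha \mapsto \eta_\psi(\alpha)\alpha^\ell$ is well-defined, so the extension problem reduces to extending a character from a subgroup of $\Cl^{\frakm'}(E)$ to the whole group, which is always possible for finite abelian groups (over $\C^\times$) and which here is even canonical given that $\psi$ itself provides the extension on $J^\frakm$. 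Once well-definedness is in hand, the remaining verifications (weight, restriction, modulus character, value field) are routine bookkeeping.
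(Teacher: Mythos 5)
Your construction can be made to work, but it takes a genuinely more global route than the paper. The paper reduces by induction to the case $\frakm=\frakp\frakm'$ and then exploits that $J^{\frakm'}$ is free abelian on the primes not dividing $\frakm'$: extending $\psi$ only requires specifying the single value $\psi'(\frakp)$, which is forced to be $\eta_\psi(\alpha)\alpha^\ell\psi(\frakq)$ for any factorization $\frakp=\alpha\frakq$ with $\frakq\in J^\frakm$; the only checks are independence of that representation and triviality of $\eta_\psi$ on $E_1^{\frakm'}$ (the conductor condition), and then $\psi'|_{J^\frakm}=\psi$, $\eta_{\psi'}=\eta_\psi$ and $L_{\psi'}=L_\psi$ come for free. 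You instead rebuild $\psi'$ from the principal-ideal formula $\alpha\frako_E\mapsto\eta_\psi(\alpha)\alpha^\ell$ on $P^{\frakm'}$ together with chosen values on class-group generators. This is closer in spirit to \cref{prop:vf} and in effect describes all extensions at once, but it costs extra bookkeeping that the paper's argument avoids: your reduction of well-definedness to $\psi'(\frakt_i)^{n_i}=\psi'(\theta_i\frako_E)$ requires the classes $[\frakt_i]$ to be an \emph{independent} generating set of $\Cl(E)$ and the $\frakt_i$ to be chosen coprime to $\frakm'$; and, unlike the paper's construction, the identity $\psi'|_{J^\frakm}=\psi$ is not immediate --- for $\frakb\in J^\frakm$ one must write $\frakb=(\beta)\prod\frakt_i^{e_i}$ and verify agreement using $\psi(\delta\frako_E)=\eta_\psi(\delta)\delta^\ell$ for $\delta\in E^\frakm$ together with your renormalization of $\psi(\fraka_i)$; this is a short computation, but it should be carried out rather than asserted.

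One justification in your step (1) is backwards: since $\frakm'\mid\frakm$, congruence $1\bmod\frakm$ is the stronger condition, so $E_1^{\frakm}\subseteq E_1^{\frakm'}$ and hence $\frako_E^\times\cap E_1^{\frakm}\subseteq\frako_E^\times\cap E_1^{\frakm'}$, not the containment you call crucial. The unit obstruction does vanish, but for a different reason: if $u\in\frako_E^\times\cap E_1^{\frakm'}$ then $u$ maps to $1$ in $(\frako_E/\frakm')^\times$, so the factoring hypothesis gives $\eta_\psi(u)=1$, while $\eta_\psi(u)=u^{-\ell}$ forces $u^\ell=1$; likewise well-definedness on $P^{\frakm'}$ under change of generator uses only $\eta_\psi(u)u^\ell=1$ for all $u\in\frako_E^\times$. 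With that correction and the verifications above filled in, your argument is complete.
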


\begin{proof}
By induction, it suffices to prove this in the case that $\frakm = \frakp \frakm'$.  If $\frakp \mid \frakm'$, then $J^{\frakm'} = J^\frakm$ and one only needs to check that $\psi(\alpha) = \alpha^\ell$ when $\alpha \equiv 1 \mod \frakm'$.  But this is immediate from $\eta_\psi$ being trivial on such $\alpha$.  

Assume $\frakm = \frakp \frakm'$ where $\frakp \nmid \frakm'$.
To extend $\psi$ to a character $\psi'$ of $J^{\frakm'}$, we only need to specify $\psi'(\frakp)$.  Write $\frakp = \alpha \frakq$, where $\alpha \in E^\times$ and $\frakq \in J^\frakm$.  Then $\alpha$ is coprime to $\frakm'$, and we can define $\psi'(\frakp) = \eta_\psi(\alpha) \alpha^\ell \psi(\frakq)$.  

To see this is independent of the choice of $\alpha$ and $\frakq$, consider another representation $\frakp = \alpha' \frakq'$ where $\frakq' \in J^\frakm$.  Then $\frakq (\frakq')^{-1} = \alpha' \alpha^{-1} \frako_E \in J^\frakm$.  From this one deduces
 $\eta_\psi(\alpha') (\alpha')^\ell \psi(\frakq') = \eta_\psi(\alpha) \alpha^\ell \psi(\frakq) = \psi'(\frakp)$.

Now $\eta_\psi$ being trivial on $E_1^{\frakm'}$ means that $\psi'$ has conductor $\frakm'$.  The assertions on $\eta_{\psi'}$ and $L_{\psi'}$ are also clear.
\end{proof}

Given a Dirichlet character $\chi$ of conductor $Q$, we define the twist of $\psi$ to be the primitive weight $\ell$ Gr\"ossencharacter of $E$ such that $(\psi \otimes \chi)(\fraka) = \chi(\calN(\fraka)) \psi(\fraka)$ for $\fraka \in J^{\frakm Q}$.
A priori its conductor is just a divisor of $\frakm Q$, however $\psi \otimes \chi$ may have conductor which is a proper divisor of $\frakm$. 

Suppose $\chi$ is a quadratic character.  Then the quadratic twist $\psi \otimes \chi$ has the same value field as $\psi$.  (More generally, if $\chi$ has image contained in $\mu_L$, then $L_{\psi \otimes \chi} \subset L_\psi$, but they are not in general equal.)  Further $\eta_{\psi \otimes \chi}^\Z = \eta_\psi^\Z$.  Consequently in our study of value fields, we may reduce to considering primitive Gr\"ossencharacters which have minimal conductor among the family of all quadratic twists $\psi \otimes \chi$.

\begin{rem}
By only working with newforms of trivial nebentypus or newforms with a fixed rationality field $K$, we cannot always restrict to considering minimal CM forms or Gr\"ossencharacters.  For instance, there is a newform $f \in S_2(15^2)$ (labelled \verb+225.2.a.f+ in the LMFDB) with rationality field $\Q(\sqrt 5)$ and CM by $\Q(\sqrt{-15})$, but it is not minimal: it is an order 4 twist of a level 45 form with character ${5 \leg \cdot}$ that has rationality field $\Q(\sqrt{-5})$.
 \end{rem}
 
 Now let us restrict to the setting that $\eta_\psi^\Z = \chi_E$.  We have the following minimal divisibility condition on the conductor.  For this and subsequent results, we will need some facts about the structure of modular multiplicative groups $(\frako_E/\frakp^e)^\times$.  These fact are well known for $p$ odd, and are recalled in \cref{sec:append} below for $p = 2$. 
  
 \begin{lem} \label{lem:mincond}
  If $\eta_\psi^\Z = \chi_E$ then $\frakd_E \mid \frakm$, where $\frakd_E$ is defined by
 \[ \frakd_E := \begin{cases}
\sqrt{\Delta_E} \frako_E & \text{if } 2 \nmid \Delta_E \\
\frakp_2 \sqrt{\Delta_E} \frako_E &  \text{if } 4 \parallel \Delta_E \\
2 \sqrt{\Delta_E} \frako_E &  \text{if } 8 \parallel \Delta_E,
\end{cases} \] 
and $\frakp_2$ is the prime above $2$ in $E$ when $2 \mid \Delta_E$.
 \end{lem}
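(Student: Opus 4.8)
The plan is to check the divisibility $\frakd_E \mid \frakm$ prime by prime, exploiting that restricting a local character at a ramified prime from $E_\frakp^\times$ down to $\Q_p^\times$ roughly halves its conductor exponent. First I would reduce to the case that $\psi$, equivalently $\eta_\psi$ by \cref{lem:psi-extend}, is primitive, so that $\frakm$ is precisely the conductor of $\eta_\psi$; this only strengthens the claim, since $\frakd_E$ dividing the true conductor forces it to divide any admissible modulus. Next I would record that $\frakd_E$ is supported only on primes ramified in $E/\Q$: indeed $\sqrt{\Delta_E}\frako_E$ is the different $\frakd_{E/\Q}$, and a short local computation in each of the three cases shows that $v_\frakp(\frakd_E) = 2\,v_p(\Delta_E) - 1$ for every prime $\frakp$ lying over a ramified prime $p$ — for odd $p$ this is $v_\frakp(\frakd_{E/\Q}) = 1$, while for $p = 2$ it equals $3$ when $4 \parallel \Delta_E$ and $5$ when $8 \parallel \Delta_E$ (the extra $\frakp_2$ resp.\ $2$ in the definition of $\frakd_E$). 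So the lemma reduces to showing $v_\frakp(\frakm) \ge 2\,v_p(\Delta_E) - 1$ for each ramified $\frakp \mid p$.

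For the local bound, fix such a $\frakp$ and set $e = v_\frakp(\frakm)$. Since $p$ is ramified, $\calN\frakp = p$, so the $p$-part of $M = \calN\frakm$ is $p^e$, and the $p$-component of the Dirichlet character $\eta_\psi^\Z$ is the composite $(\Z/p^e\Z)^\times \to (\frako_E/\frakp^e)^\times \xrightarrow{\eta_\psi} \C^\times$, where the first map is induced by $\Z \hookrightarrow \frako_E$ (well defined because $p^e\frako_E = \frakp^{2e} \subseteq \frakp^e$). Because $p\frako_E = \frakp^2$, the image of $1 + p^c\Z_p$ in $\frako_E/\frakp^e$ is contained in $1 + \frakp^{2c}\frako_E$, hence is trivial in $(\frako_E/\frakp^e)^\times$ as soon as $2c \ge e$; consequently the $p$-part of $\eta_\psi^\Z$ factors through $(\Z/p^c\Z)^\times$ for every $c \ge e/2$, so its $p$-conductor exponent is at most $\lceil e/2 \rceil$. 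But the hypothesis $\eta_\psi^\Z = \chi_E$ forces this exponent to equal $v_p(\Delta_E)$, so $\lceil e/2 \rceil \ge v_p(\Delta_E)$, i.e.\ $e \ge 2\,v_p(\Delta_E) - 1$ (in particular $\frakp \mid \frakm$), which is exactly the bound sought. Assembling these over all ramified $\frakp$ gives $\frakd_E \mid \frakm$.

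The genuinely routine parts are the case-by-case computation of $v_\frakp(\frakd_E)$ and the elementary step $\lceil e/2\rceil \ge v_p(\Delta_E) \Rightarrow e \ge 2\,v_p(\Delta_E) - 1$. The main obstacle, and essentially the only place where the structure of $(\frako_E/\frakp_2^e)^\times$ might need to be invoked, is the wildly ramified prime $\frakp_2$: one must pin down the exact 2-adic valuations so that $v_{\frakp_2}(\frakd_{E/\Q}) = v_2(\Delta_E)$ and $v_{\frakp_2}(\frakd_E)$ match $2\,v_2(\Delta_E) - 1 \in \{3,5\}$ in both sub-cases. I would double-check this against small examples such as $E = \Q(i)$ (where $\frakd_E = \frakp_2^3$ and admissible $\psi$ have $4 \mid \calN\frakm/2$) and $E = \Q(\sqrt{-2})$ (where $\frakd_E = \frakp_2^5$), to make sure the "$+1$'' and "$+2$'' corrections in the definition of $\frakd_E$ are accounted for correctly.
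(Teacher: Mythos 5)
Your argument is correct, and its core coincides with the paper's: the fact that $\chi_E$ is primitive of conductor $|\Delta_E|$ forces a lower bound on $v_\frakp(\frakm)$ at each ramified prime, because restricting from $(\frako_E/\frakm)^\times$ to $\Z$ roughly halves conductor exponents there. Where you genuinely differ is in how the local thresholds are obtained. The paper phrases the condition as injectivity of $(\Z/\Delta_E\Z)^\times \to (\frako_E/\frakm)^\times$ and, at the prime $2$, quotes the structure theory of $(\frako_E/\frakp_2^j)^\times$ from \cref{sec:append} (namely that $(\Z/4\Z)^\times$, resp.\ $(\Z/8\Z)^\times$, embeds only for $j \ge 3$, resp.\ $j \ge 5$). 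You instead get all ramified primes uniformly from the valuation identity $p\frako_E = \frakp^2$: the kernel of $(\Z/p^e\Z)^\times \to (\frako_E/\frakp^e)^\times$ contains the classes $\equiv 1 \bmod p^{\lceil e/2\rceil}$, so the restricted character has $p$-conductor exponent at most $\lceil e/2\rceil$, and equating this with $v_p(\Delta_E)$ gives $e \ge 2v_p(\Delta_E)-1 = v_\frakp(\frakd_E)$ in all three cases ($1$, $3$, $5$). This buys a more self-contained proof (no appeal to the appendix, and you only prove the direction of the embedding criterion that the lemma actually uses), at the cost of slightly less structural information. Two minor points: the opening reduction to primitive $\psi$ via \cref{lem:psi-extend} is harmless but unnecessary, since your local bound applies verbatim to any modulus $\frakm$; and in the step ``the exponent equals $v_p(\Delta_E)$'' you only need the inequality $\ge v_p(\Delta_E)$, which is immediate because the primitive character underlying $\eta_\psi^\Z$ is $\chi_E$.
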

 
 \begin{proof}
 Since $\chi_E$ is a primitive Dirichlet character of conductor $|\Delta_E|$, we need that the natural map $(\Z/\Delta_E\Z)^\times \to (\frako_E/\frakm)^\times$ is injective.  Write $\Delta_E = -2^e q_1 \dots q_t$, where the $q_i$'s are odd primes and $e \in \{ 0, 2, 3 \}$.  Then
 $(\Z/\Delta_E\Z)^\times \simeq (\Z/2^e\Z)^\times \times \prod (\Z/q_i\Z)^\times$.  Via Chinese Remainder Theorem, $(\Z/\Delta_E\Z)^\times \to (\frako_E/\frakm)^\times$ is injective if and only if  decomposition for $(\frako_E/\frakm)^\times$, we see that $(\Z/p\Z)^\times$ injects into $(\frako_E/\frakp^{v_\frakp(\frakm)})^\times$ for all $p \mid \Delta_E$ (where, as usual, $\frakp$ denotes the prime above $p$).
 
 Suppose now that $\Delta_E$ is even.  Since $(\Z/4\Z)$ (resp.\ $(\Z/8\Z)^\times$) embeds in $(\frako_E/\frakp_2^j)^\times$ if and only if $j \ge 3$ (resp.\ $j \ge 5$), we must have that $\frakp_2^3 \mid \frakm$ when $4 \parallel \Delta_E$ and $\frakp_2^5 \mid \frakm$ when $8 \parallel \Delta_E$.
 Thus in all cases $\frakd_E \mid \frakm$.
  \end{proof}

Consequently, if $f \in S_k(N)$ is a newform with CM by $E$, we must have $N_E \mid N$, where 
\[ N_E := \Delta_E \cdot \calN(\frakd_E) =
\begin{cases}
\Delta_E^2 &  \text{if } 2 \nmid \Delta_E \\
2 \Delta_E^2 & \text{if } 4 \parallel \Delta_E \\
4 \Delta_E^2 &  \text{if } 8 \parallel \Delta_E.
\end{cases} \]

In particular, if we want to determine the existence of a Gr\"ossencharacter $\psi$ of $E$ with value field $L=EK$ (resp.\ CM form $f$ with rationality field $K$), the minimal conductor (resp.\ level) in which it can occur is $\frakd_E$ (resp.\ $N_E$).  Under certain conditions, we can reduce to conductor $\frakd_E$ (resp.\ level $N_E$):

\begin{prop} \label{prop:mintwist}
Suppose $\psi$ is primitive with $\eta_\psi^2 = 1$ and $\eta_\psi^\Z = \chi_E$. Then there is a quadratic twist $\psi \otimes \chi$ which has conductor $\frakd_E$.  Moreover, either $\Delta_E$ is odd or $\Delta_E \equiv 0 \mod 8$.  
\end{prop}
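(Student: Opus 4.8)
The plan is to analyze the conductor $\frakm$ of the primitive Gr\"ossencharacter $\psi$ one prime at a time, using that $\eta_\psi$ is a quadratic character of $(\frako_E/\frakm)^\times$ which is primitive of conductor $\frakm$ (by \cref{lem:psi-extend}, primitivity of $\psi$ and $\eta_\psi$ coincide). At an odd prime $\frakp \mid \frakm$, the group $(\frako_E/\frakp^e)^\times$ is cyclic, so it has a unique subgroup of index $2$ and hence a unique quadratic character, which is primitive precisely of conductor $\frakp$ (for $\frakp$ unramified, one needs $e=1$; for $\frakp = \frakp^2 = (p)$ ramified, $(\frako_E/\frakp^e)^\times$ is still cyclic of order $p^{e-1}(p-1)$ for $p$ odd, and again the quadratic part has conductor $\frakp$). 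Thus the odd part of $\frakm$ is squarefree, i.e.\ a product of distinct odd primes. The key input here is \cref{lem:mincond}, which already forces $\frakd_E \mid \frakm$; combined with the squarefree conclusion at odd primes, the odd part of $\frakm$ is \emph{exactly} the odd part of $\frakd_E$.

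Next I would handle the prime $2$, splitting into the three cases for $v_2(\Delta_E)$. If $\Delta_E$ is odd, then $2 \nmid \frakd_E$, and I must rule out $2 \mid \frakm$: if $2$ is inert or ramified this is immediate since $\eta_\psi^\Z = \chi_E$ has conductor $|\Delta_E|$ odd and \cref{lem:mincond}'s argument shows the $2$-part of $\frakm$ would have to support a primitive quadratic character of $(\frako_E/\frakm_2)^\times$ not coming from $\chi_E$; but more cleanly, here one should invoke $\eta_\psi^2 = 1$ together with the structure of $(\frako_E/\frakp_2^e)^\times$ recalled in \cref{sec:append} to see any additional $2$-power in the conductor contributes a quadratic character that could be twisted away — this is exactly the mechanism the "Moreover" clause is ruling in or out. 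Precisely: when $\Delta_E$ is odd, the only obstruction to taking $\frakm = \frakd_E$ would be $2 \mid \frakm$, and one argues that since $\eta_\psi$ is quadratic, the local component at $\frakp_2$ is a quadratic character $\chi_2$ of $(\frako_E/\frakp_2^{v})^\times$; twisting $\psi$ by the Dirichlet character $\chi_2 \circ \calN$ (or its "descent" to $\Z$) removes this component without affecting the value field (quadratic twists preserve $L_\psi$, as noted before \cref{lem:mincond}) nor $\eta_\psi^\Z$. If $8 \parallel \Delta_E$, then $\frakd_E = 2\sqrt{\Delta_E}\frako_E$ has $v_{\frakp_2}(\frakd_E)$ equal to the minimal exponent needed for $(\Z/8\Z)^\times$ to inject, and one checks no larger $2$-part is forced and any larger one can be twisted away by a quadratic Dirichlet character ramified only at $2$.

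The remaining, and genuinely restrictive, case is $4 \parallel \Delta_E$, where the "Moreover" clause asserts this \emph{cannot} happen for a primitive $\psi$ with $\eta_\psi^2 = 1$ and $\eta_\psi^\Z = \chi_E$. Here $\chi_E$ is the quadratic Dirichlet character of conductor $4 \cdot(\text{odd})$, whose $2$-part is the character mod $4$; I would use the structure of $(\frako_E/\frakp_2^e)^\times$ from \cref{sec:append} — since $2$ is ramified, $\frakp_2^2 = (2)$, and $(\frako_E/\frakp_2^e)^\times$ has a specific $2$-group structure — to show that a \emph{quadratic} modulus character $\eta_\psi$ with $\eta_\psi^\Z$ equal to the mod-$4$ character on the $2$-part is impossible, because the constraint $\eta_\psi(u) = u^{-\ell}$ for units $u \in \mu_E$ together with $\eta_\psi$ quadratic forces $\ell$ to have a parity incompatible with $\eta_\psi^\Z = \chi_E$ being odd (recall the analysis in \cref{sec:gros-def} that $\eta_\psi(-1) = \chi_E(-1) = -1$ forces $\ell$ odd and $2 \notin \frakm$ — wait, we already have $2 \nmid \frakm$ only in the odd-$\Delta_E$ case). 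The cleanest route: when $4 \parallel \Delta_E$, write out $(\frako_E/\frakp_2^e)^\times$ explicitly; its quadratic characters are enumerable, and one shows the ones compatible with the required $\eta_\psi^\Z$-behavior on $\Z/4$ are all \emph{imprimitive} at $\frakp_2$ or have the wrong order, contradicting primitivity of $\psi$ with $\eta_\psi^2=1$. \textbf{The main obstacle} I anticipate is precisely this explicit $2$-adic bookkeeping in the ramified case $4 \parallel \Delta_E$: one must pin down the isomorphism type of $(\frako_E/\frakp_2^e)^\times$ for small $e$ (using \cref{sec:append}), identify which of its quadratic characters restrict to the mod-$4$ character via the norm map $\Z/4 \to \frako_E/\frakp_2^e$, and verify none is primitive of the needed conductor — a finite but delicate case check. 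Everything else (odd primes squarefree, the $8 \parallel \Delta_E$ and odd $\Delta_E$ cases reducing to $\frakd_E$) follows routinely from cyclicity and the twist invariance of value fields already established.
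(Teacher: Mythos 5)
Your first step contains a genuine error that removes the main content of the proposition at odd primes. From primitivity you correctly get that each odd prime divides $\frakm$ to the first power at most, but the inference ``odd part of $\frakm$ squarefree $+$ $\frakd_E \mid \frakm$ $\Rightarrow$ odd part of $\frakm$ equals odd part of $\frakd_E$'' is false: an odd prime $p \nmid \Delta_E$ can perfectly well divide $\frakm$. For instance, take $\eta$ to be $\chi_E$ on $(\frako_E/\frakd_E)^\times$ times the character $\chi_p \circ \calN$ modulo $p$ for a split $p$; this is quadratic, primitive of conductor $p\,\frakd_E$, and still restricts to $\chi_E$ on $\Z$ (the extra factor is trivial on the diagonal). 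That is exactly why the proposition only asserts the existence of a quadratic \emph{twist} of conductor $\frakd_E$, not that $\frakm = \frakd_E$. The missing step — which is the heart of the paper's proof — is to show that at each prime $p \nmid \Delta_E$ dividing $\calN(\frakm)$ the local component of $\eta_\psi$ is of the form $\chi_p \circ \calN$ for a quadratic Dirichlet character $\chi_p$ of $\Q$: using $\eta_\psi^\Z = \chi_E$ one gets that $\eta_p$ is trivial on the image of $(\Z/p\Z)^\times$, and then one argues separately for $p$ split (triviality on the diagonal forces $\eta_p(a,b) = \chi_p(a)\chi_p(b)$) and $p$ inert ($\eta_\frakp$ factors through $C_{p^2-1}/C_{p-1}$, i.e.\ through the norm). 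Only after this verification does twisting by $\chi_p$ actually lower the conductor; your formulation ``twist by $\chi_2 \circ \calN$'' has the mechanism backwards, since twists are by Dirichlet characters of $\Q$ and one must check the local component of $\eta_\psi$ is norm-composed. The same split/inert analysis is needed at $2$ when $2 \nmid \Delta_E$, and you do not carry it out there either.

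The ``Moreover'' clause, which you flag as the main obstacle and leave as an unexecuted case check, in fact has a two-line argument you missed: when $4 \parallel \Delta_E$, \cref{lem:mincond} forces $\frakp_2^3 \mid \frakm$, and by the structure results in \cref{sec:append} the element $-1$ is the square of an element of order $4$ in $(\frako_E/\frakp_2^e)^\times$ for $e \ge 3$; since $\eta_\psi(-1) = \chi_E(-1) = -1$, this forces $4 \mid \mathrm{ord}(\eta_\psi)$, contradicting $\eta_\psi^2 = 1$. No enumeration of quadratic characters or compatibility check with the mod-$4$ character is needed. Similarly, in the case $8 \parallel \Delta_E$ the paper does not twist at $2$ at all: primitivity together with $\eta_\psi^2 = 1$ forces $v_{\frakp_2}(\frakm) = 5$ exactly, which is the exponent in $\frakd_E$; your suggestion to ``twist away any larger $2$-part'' would again require the norm-composition verification and is not how the reduction goes.
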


This is essentially proven in \cite{schutt}, though it is formulated in the setting of $L = E$.

\begin{proof}
Say $\frakm = \frakp^e \frakm'$, where $\frakm'$ is coprime to $\frakp$. Let $p$ be the prime below $\frakp$.  Decompose $(\frako_E/\frakm)^\times = (\frako_E/\frakp^e)^\times \times (\frako_E/\frakm')^\times$.  Write $\eta = \eta_\psi$, and let $\eta_\frakp$ be the character of $(\frako_E/\frakp^e)^\times$ obtained by restricting $\eta$ to this factor. 

By \cref{lem:psi-extend}, $\eta$ is primitive.  Since $(\frako_E/\frakp^e)^\times$ is isomorphic to $(\frako_E/\frakp_E)^\times$ times a group of order $q^{e-1}$, where $q=\calN (\frakp)$, $\eta_\frakp$ must factor through $(\frako_E/\frakp_E)^\times$ if $q$ is odd.  Hence we may assume $e \le 1$ when $p$ is odd.  If $e=1$, primitivity means $\eta_\frakp$ is a (nontrivial) quadratic character.

Next suppose that $p$ is an odd prime such that $p \mid \calN(\frakm)$ and $p \nmid \Delta_E$.  We show we can twist away the $p$-part of the conductor.  

Consider the natural map $(\Z/p\Z)^\times \to (\frako_E/p\frako_E)^\times$, and let $\eta_p$ be the restriction of $\eta$ to $(\frako_E/p\frako_E)^\times$.  Then $\eta_p$ must be trivial on the image of this map.

If $p\frako_E = \frakp \bar \frakp$ is split in $E$, then identifying $(\frako_E/p\frako_E)^\times = (\frako_E/\frakp)^\times \times (\frako_E/\bar \frakp)^\times = (\Z/p\Z)^\times \times (\Z/p\Z)^\times$, we see $\eta_p(a,a) = 1$ for all $a \in (\Z/p\Z)^\times$.  Hence $\eta_p(a,b) = \chi_p(a)\chi_p(b)$, where $\chi_p$ denotes the unique quadratic Dirichlet character of conductor $p$.  (We remark this implies that if $\frakp \mid \frakm$, so does $\bar \frakp$.) Since we also have $\eta_p = \chi_p \circ \calN$ as characters of $(\frako_E/p\frako_E)^\times$, the twist $\psi \otimes \chi_p$ must have conductor $p^{-1} \frakm$.  

If $p$ is inert in $E$, then $(\frako_E/\frakp)^\times \simeq C_{p^2-1}$, and $\eta_\frakp$ factors through $C_{p^2-1}/C_{p-1}$.  Again, this must match with $\chi_p \circ \calN$, so that $\psi \otimes \chi_p$ has conductor $\frakp^{-1} \frakm$.

Finally we treat the case $p=2$.  

First suppose that $2 \mid \Delta_E$.  By \cref{lem:mincond}, we have that $e \ge 3$ if $4 \parallel \Delta_E$ and $e \ge 5$ if $8 \mid \Delta_E$.
If $4 \parallel \Delta_E$, $-1$ is the square of an element of order 4 in $(\frako_E/\frakp^e)^\times$ which forces $\eta_\psi$ to have order a multiple of 4.  Hence $4 \parallel \Delta_E$ is impossible.
If $8 \mid \Delta_E$, then only $e=5$ is possible for $\eta_\frakp$ to be primitive and quadratic.  

Now assume that $p = 2 \nmid \Delta_E$.  Then $\eta_\frakp$ can only be quadratic and primitive if $e \le 3$.  

If $2 = \frakp \bar \frakp$ splits in $E$, then as in the $p$ odd case we may identify $\eta_\frakp = \eta_{\bar \frakp}$ as the same (possibly imprimitive) character of $(\Z/8\Z)^\times \simeq \C_2^2$.  Viewing $\chi = \eta_\frakp$ as a Dirichlet character of conductor dividing 8, we see that $\psi \otimes \chi$ has conductor coprime to 2. 

If $2$ is inert in $E$, then $\eta_\frakp$ factors through $(\frako_E/8\frako_E)^\times$ modulo the image of $(\Z/8\Z)^\times$.  This quotient is isomorphic to $C_3 \times C_2^2$, so $\eta_\frakp$ factors through $C_2^2$.
Since $E_2/\Q_2$ is unramified, the norm induces a surjective map $(\frako_E/8\frako_E)^\times \to (\Z/8\Z)^\times$, and hence $\eta_\frakp$ must be of the form $\chi \circ \calN$ for one of the quadratic Dirichlet characters $\chi$ of conductor dividing 8.  For this $\chi$, one gets that $\psi \otimes \chi$ has conductor coprime to 2.
\end{proof}

\begin{cor} \label{cor:Lpsicount}
Let $\ell \ge 1$ be odd and $\calA^2_\ell(E)$ be the set of weight $\ell$ Gr\"ossencharacters $\psi$ of $E$ with quadratic modulus character $\eta_\psi$ such that $\eta_\psi^\Z = \chi_E$.  If $\Cl(E)$ is cyclic or has exponent $2$, then the set $\{ L_\psi : \psi \in \calA^2_\ell(E) \}$ of value fields up to isomorphism has cardinality (i) $1$ if $\Delta_E$ is odd, (ii) $0$ if $4 \parallel \Delta_E$, and (iii) $1$ or $2$ if $8 \parallel \Delta_E$.
\end{cor}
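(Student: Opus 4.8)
The plan is to reduce, by quadratic twisting, to counting the primitive quadratic modulus characters of conductor $\frakd_E$, exploiting that such a character determines the value field. Since $\eta_\psi^\Z = \chi_E$ is nontrivial, $\eta_\psi$ has exact order $r = 2$. If $\Cl(E)$ is cyclic, or has exponent $2$ (so that its exponent $n$ divides $r = 2$), then \cref{rem:Leta} shows $L_\psi$ is determined up to isomorphism by $\eta_\psi$ alone. Hence it is enough to count the modulus characters $\eta_\psi$ realised by $\psi \in \calA^2_\ell(E)$, together with the value fields they yield.

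We may assume $\psi$ is primitive, since replacing it by the primitive Gr\"ossencharacter it extends from changes neither $\eta_\psi$, nor $\eta_\psi^\Z$, nor $L_\psi$ (\cref{lem:psi-extend}). By \cref{prop:mintwist} there is a quadratic Dirichlet character $\chi$ such that $\psi \otimes \chi$ has conductor $\frakd_E$. Quadratic twisting preserves $L_\psi$ and $\eta_\psi^\Z$, and $\eta_{\psi \otimes \chi} = (\chi \circ \calN)\,\eta_\psi$ is a product of two characters of order at most $2$, hence quadratic and (its $\Z$-restriction being $\chi_E$) nontrivial; thus $\psi \otimes \chi \in \calA^2_\ell(E)$. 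So the set of value fields, up to isomorphism, is unchanged if we restrict to $\psi$ of conductor exactly $\frakd_E$. Moreover \cref{prop:mintwist} also gives $\Delta_E$ odd or $\Delta_E \equiv 0 \bmod 8$, so $\calA^2_\ell(E) = \emptyset$ when $4 \parallel \Delta_E$, which is case (ii).

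For the remaining cases write $\frakd_E = \frakp_2^{e} \prod_{p \mid \Delta_E,\ p \text{ odd}} \frakp$ with $e = 0$ if $\Delta_E$ is odd and $e = 5$ if $8 \parallel \Delta_E$, so that $(\frako_E/\frakd_E)^\times \cong (\frako_E/\frakp_2^{e})^\times \times \prod_p (\frako_E/\frakp)^\times$. For each odd ramified $p$ the residue field of $\frakp$ is $\F_p$ and $(\Z/p\Z)^\times \to (\frako_E/\frakp)^\times$ is an isomorphism, so $\eta^\Z = \chi_E$ forces the $\prod_p (\frako_E/\frakp)^\times$-component of $\eta$ to be the transport of $\chi_E$, uniquely. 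When $\Delta_E \ne -3$ we have $\mu_E = \{\pm 1\}$ and $-1 \notin E_1^{\frakd_E}$ (as $\frakd_E \nmid 2\frako_E$), while $\eta(-1) = \eta^\Z(-1) = \chi_E(-1) = -1 = (-1)^{-\ell}$, so the criterion for such an $\eta$ to arise from a weight $\ell$ Gr\"ossencharacter (and the primitivity of the resulting $\psi$, which matches that of $\eta$) imposes nothing more. Thus in case (i), $\Delta_E$ odd, there is a unique admissible $\eta$ — the transport of $\chi_E$, which is quadratic and primitive of conductor $\frakd_E = \prod_p \frakp$ — and the set of value fields has exactly one element. (For $\Delta_E = -3$ the relation $\eta(\zeta_6) = \zeta_6^{-\ell}$ forces $3 \mid \ell$, so a minor caveat is needed here; this does not occur in the intended application, where $h_E > 1$.)

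The remaining content is case (iii), $8 \parallel \Delta_E$, which I expect to be the main obstacle. Here the odd part of $\eta$ is still forced, and it remains to count the \emph{primitive} quadratic characters $\eta_{\frakp_2}$ of $(\frako_E/\frakp_2^5)^\times$ — a $2$-group of order $16$ — whose restriction to the image $\bar U$ of $(\Z/8\Z)^\times$ in $(\frako_E/\frakp_2^5)^\times$ equals $\chi_E|_{(\Z/8\Z)^\times}$, a quadratic character of conductor exactly $8$. If nonempty, the set of quadratic characters of $(\frako_E/\frakp_2^5)^\times$ restricting to a fixed character of $\bar U$ is a coset of $\Hom\bigl((\frako_E/\frakp_2^5)^\times/\bar U,\, C_2\bigr)$. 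The crux is to show, from the structure of $(\frako_E/\frakp_2^5)^\times$ and of $\bar U$ recorded in \cref{sec:append} (the ramified $p = 2$ case), that the quotient $(\frako_E/\frakp_2^5)^\times/\bar U$ is cyclic of order $4$: then that $\Hom$-group has order $2$, so there are at most $2$ admissible $\eta_{\frakp_2}$, hence at most $2$ admissible $\eta$ and at most $2$ value fields, while producing one primitive quadratic extension of $\chi_E|_{(\Z/8\Z)^\times}$ shows the set is nonempty, giving cardinality $1$ or $2$. This $p = 2$ computation is the only nonformal ingredient; the rest is bookkeeping with \cref{rem:Leta}, \cref{lem:psi-extend}, and \cref{prop:mintwist}.
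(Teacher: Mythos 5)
Your proposal is correct and follows essentially the same route as the paper's proof: reduce to conductor $\frakd_E$ via \cref{prop:mintwist} (which also disposes of case (ii)), count the quadratic characters of $(\frako_E/\frakd_E)^\times$ restricting to $\chi_E$, and invoke \cref{rem:Leta}. The one step you leave as ``expected'' --- that $(\frako_E/\frakp_2^5)^\times/\overline{U} \simeq C_4$ when $8 \parallel \Delta_E$ --- is precisely what the $8 \parallel \Delta_E$ case of \cref{sec:append} with $n=5$ supplies (the paper states it as $(\frako_E/\frakd_E)^\times \simeq (\Z/\Delta_E\Z)^\times \times C_4$), and your $\Delta_E = -3$ caveat, though not addressed in the paper, is harmless since the corollary is only applied when $h_E > 1$.
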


\begin{proof}
There are no such $\psi$ when in case (ii), so assume we are in case (i) or (iii).  By \cref{prop:mintwist}, we may assume $\psi$ has modulus $\frakd_E$.  

We have $(\frako_E/\frakd_E)^\times \simeq (\Z/\Delta_E\Z)^\times$ when $\Delta_E$ is odd and $(\frako_E/\frakd_E)^\times \simeq (\Z/\Delta_E\Z)^\times \times C_4$ when $8 \mid \Delta_E$.  Hence there is 1 possibility for $\eta_\psi$ when $\Delta_E$ is odd and 2 possibilities for a quadratic $\eta_\psi$ when $8 \mid \Delta_E$ (the generator of the $C_4$ factor must map to $\pm 1$).  Now apply \cref{rem:Leta}.
\end{proof}

\section{Value fields for class number 1} \label{sec:h1}

Suppose $h_E = 1$, i.e., $-\Delta_E \in \{ 3,4,7,8,11,19,43,67,163 \}$.  Then for fixed $\frakm$ and $\ell$, the map $\psi \mapsto \eta_\psi$ defines a bijection of Gr\"ossencharacters $\psi$ with conductor $\frakm$ and weight $\ell$ and characters $\eta: (\frako_E/\frakm)^\times/(\frako_E^\times \cap E^\frakm_1) \to \C^\times$.  Further, by \cref{lem:L0}, $L_\psi = \Q(\zeta_r) E$ where $r$ is the order of $\eta_\psi$.
Recall from \cref{sec:gros-def} that $(\frako_E^\times \cap E^\frakm_1)$ is trivial except in a few small cases.

We explicitly exhibit all fields of the form $L=\Q(\zeta_r)E$ with $d = [L:E] \le 3$ as value fields of Gr\"ossencharacters $\psi$ such that $\eta_\psi^\Z = \chi_E$.
It suffices to produce a character $\eta$ whose values generate $L$ and whose restriction to $\Z$ is $\chi_E$.  Then for any odd $\ell \ge 1$, $\eta$ determines a weight $\ell$ Gr\"ossencharacter such that $\eta_\psi = \eta$. 
We may assume $r$ is even, and construct $\eta$ as follows. 

From \cref{lem:mincond}, we need to work with conductors $\frakm$ which are multiples of $\frakd_E$.  We will usually choose conductors of the form $\frakm =  \frakp^e \frakd_E$ where $\frakp$ is coprime to $\frakd_E$.  Then it suffices to choose $\frakp^e$ such that $(\frako_E/\frakp^e)^\times / (\Z/p^e\Z)^\times$ (meaning the quotient by the image of the natural map\footnote{For simplicity, from now on if $n \in \Z$ and $\frakn$ is an ideal in $\frako_E$ such that the natural map $(\Z/n\Z)^\times$ to $(\frako_E/\frakn)^\times$ is an injection, we identify $(\Z/n\Z)^\times$ with its image in $(\frako_E/\frakn)^\times$.}) has a character $\lambda$ of order $r$, or $\frac r2$ if $r \equiv 2 \mod 4$.  Then prescribing $\eta$ to be $\chi_E$ mod $\Delta_E$ and $\lambda$ mod $\frakp^e$ makes $\eta$ a character of order $r$ and conductor $\frakm$ whose restriction to $\Z$ is $\chi_E$.

\subsection{$d=1$}
First we explain how to construct $\eta$ with values in $L=E$.  This typically means we can take $\eta$ to be quadratic, and usually we may take $\frakm = \frakd_E$.

If $\Delta_E < -4$ is odd, we may take $\frakm = \frakd_E = \sqrt{\Delta_E} \frako_E$ and then $\frako_E^\times \cap E^\frakm_1 = \{ 1 \}$.  Since $(\frako_E/\frakm)^\times \simeq (\Z/\Delta_E \Z)^\times$, let $\eta = \chi_E$.

If $E = \Q(i)$, take $\frakm = \frakd_E = 2(1+i)\frako_E$.  Then $\frako_E^\times \cap E^\frakm_1 = \{ 1 \}$, and $(\frako_E/\frakm)^\times \simeq \{ \pm 1, \pm i \}$.  We can take $\eta$ to be either order 4 character of $(\frako_E/\frakm)^\times$, and it restricts to the nontrivial quadratic character on $(\Z/4 \Z)^\times \simeq \{ \pm 1 \}$.

If $E = \Q(\zeta_3)$, take $\frakm = \frakd_E^2 = 3 \frako_E$.  Here we do not take $\frakm=\frakd_E$ because $\frako_E^\times \cap E^{\frakd_E}_1 \ne \{ 1 \}$.  However $\frako_E^\times \cap E^\frakm_1 = \{ 1 \}$, and $(\frako_E/\frakm)^\times \simeq \langle \zeta_6 \rangle$.  Take $\eta$ to be any even order character of $(\frako_E/\frakm)^\times$, which restricts to the nontrivial quadratic character on $(\Z/3\Z)^\times$.

If $E = \Q(\sqrt{-2})$, take $\frakm = \frakd_E = 4\sqrt{-2} \frako_E$.  Then $(\frako_E/\frakm)^\times \simeq (\Z/8\Z)^\times \times C_4$, and we may take $\eta$ to be either quadratic character restricting to $\chi_E$ on $(\Z/8\Z)^\times$.

In all of these cases, for any odd $\ell \ge 1$, $\eta$ determines a Gr\"ossencharacter $\psi$ of $E$ with conductor $\frakm$ and weight $\ell$ such that $\eta_\psi = \chi_E$ and $L_\psi = E$.  In particular, one gets a CM form $f_\psi \in S_{\ell + 1}(N)$ with rationality field $K = \Q$, where $N = N_E$ or $27$ according to whether $\Delta_E \ne -3$ or $\Delta_E = -3$.

\subsection{$d=2$} \label{sec:h1d2}

Now we construct $\eta$ whose values generate a quadratic extension $L = \Q(\zeta_r) E$ of $E$.

Assume first that $\Delta_E < -4$.  Then $\mu_E = \{ \pm 1 \}$, and the only quadratic extensions of the form $L=\Q(\zeta_r)E$ occur when $r = 4, 6$.  

We first treat $r=4$, i.e., $L = E(i)$.  

Except in the case of $\Delta_E = -8$ and $\Delta_E = -11$, the prime 3 is inert in $E$, and thus we take $\frakm = 3\frakd_E$.  Then $(\frako_E/\frakm)^\times \simeq (\frako_E/3\frako_E)^\times \times 
(\frako_E/\frakd_E)^\times$.  We define $\eta$ to be one of the order 4 characters on the $(\frako_E/3\frako_E)^\times/(\Z/3\Z)^\times \simeq C_4$ component and $\chi_E$ on $(\frako_E/\frakd_E)^\times \simeq (\Z/\Delta_E \Z)^\times$ component.  

When $\Delta_E = -8$, we can take $\frakm = \frakd_E$ since $(\frako_E/\frakm)^\times \simeq (\Z/8\Z)^\times \times C_4$, and let $\eta$ be order 4 on the $C_4$ component.

When $\Delta_E = -11$ (and also when $\Delta_E = -19$), 5 is split in $E$ and we can take $\frakm = 5\frakd_E$.  Then we take $\eta$ to be of the form $\lambda(a,b) = \lambda(a) \lambda(b)$ on the factor $(\frako_E/5\frako_E^\times) \simeq (\Z/5\Z)^\times \times (\Z/5\Z)^\times$, where $\lambda$ is an order 4 character.  

Hence for each $\Delta_E < -4$ we get a Gr\"ossencharacter $\psi$ of conductor $\frakm$ and any odd weight $\ell$ with value field $L = E(i)$.  Then the associated CM form $f_\psi$ has rationality field $K = \Q(\sqrt{|\Delta_E|})$ and level $9N_E$ or $N_E$, or $25N_E$, according to the 3 cases above.

Next we treat $r=6$, i.e., $L = E(\zeta_3)$.

Except for $\Delta_E = -7, -8$, one sees that 2 is inert in $E$.  Thus there exists an order 6 character $\eta$ mod $\frakm = 2 \frakd_E$ which is compatible with $\chi_E$.

Except for $\Delta_E = -11, -19$, the prime 5 is inert in $E$.  Hence there exists an order 6 character $\eta$ mod $\frakm = 5 \frakd_E$ which is compatible with $\chi_E$.

In these situations we get a Gr\"ossencharacter $\psi$ of conductor $\frakm$ and any odd weight $\ell$ with value field $L = E(\zeta_3)$.  Then the associated CM form $f_\psi$ has rationality field $K = \Q(\sqrt{3|\Delta_E|})$ and level $4N_E$ or $25N_E$.

Finally let us consider $\Delta_E = -4, -3$.

If $E = \Q(i)$, then one can have $L = \Q(\zeta_8)$ or $L = \Q(\zeta_{12})$.  Since $7$ and $11$ are inert in $E$, we can achieve each of these value fields  with conductors $\frakm = 7 \frakd_E$ and $\frakm = 11 \frakd_E$, respectively.  This produces CM forms in even weights $k \ge 2$ of levels $2^5 \cdot 7^2$ and $2^5 \cdot 11^2$ with rationality fields $K = \Q(\sqrt 2)$ and $K = \Q(\sqrt 3)$.

If $E = \Q(\zeta_3)$, then one can have $L = \Q(\zeta_{12})$.  Again, 11 is inert in $E$, so we obtain this value field in conductor $\frakm = 11 \frakd_E$.  This yields a CM form in even weights $k \ge 2$ of level $3^2 \cdot 11^2$ with rationality field $\Q(\sqrt 3)$.

\subsection{$d=3$} \label{sec:h1d3}
Now we construct $\eta$ whose values generate a cubic extension $L = \Q(\zeta_r) E$ of $E$.  There are no degree 3 cyclotomic extensions of $\Q$, so the only possibility is $E \subset \Q(\zeta_r)$ and $\Q(\zeta_r)$ has degree 6.
Thus either $E = \Q(\zeta_3)$ and $L = \Q(\zeta_9)$ or $E = \Q(\sqrt{-7})$ and $L = \Q(\zeta_7)$.

Suppose $E = \Q(\zeta_3)$ and $L = \Q(\zeta_9)$.
Take $\frakm = \frakd_E^6 = 27 \frako_E$.  Then $(\frako_E/\frakm)^\times \simeq \langle \zeta_{18} \rangle \simeq (\Z/3\Z)^\times \times C_9$.  Define $\eta$ to be the quadratic character on $(\Z/3\Z)^\times$ and order 9 on $C_9$.  This gives a Gr\"ossencharacter $\psi$ for any odd weight $\ell$ with $\eta_\psi^\Z = \chi_E$ and $L_\psi = L$.  Then $f_\psi \in S_{\ell + 1}(3^7)$ and has rationality field $K = \Q(\zeta_9)^+$.

Suppose $E = \Q(\sqrt{-7})$ and $L = \Q(\zeta_7)$.  We can take $\frakm = \frakd_E^2 = 7 \frako_E$.  Then $(\frako_E/\frakm)^\times \simeq (\Z/7\Z)^\times \times C_7$.  Define $\eta$ to be the quadratic character on $(\Z/7\Z)^\times$ and order 7 on $C_7$.  This gives a Gr\"ossencharacter $\psi$ for any odd weight $\ell$ with $\eta_\psi^\Z = \chi_E$ and $L_\psi = L$.  One has $f_\psi \in S_{\ell + 1}(7^3)$ with rationality field $K = \Q(\zeta_7)^+$.

\section{Value fields with quadratic modular characters}
\label{sec:quadmod}

Here we will determine value fields $L$ of Gr\"ossencharacters $\psi$ of $E$ such that $\eta_\psi$ is quadratic and $\eta_\psi^\Z = \chi_E$, when $E$ has exponent 2 or 3.  
By \cref{prop:mintwist}, we may assume $\Delta_E \not \equiv 4 \mod 8$ and the conductor of $\psi$ is $\frakm = \frakd_E$.  Hence the associated CM form $f_\psi$ will have level $N_E$.

Suppose $d = [L:E]$ is prime and $\psi$ has odd weight $\ell \ge 1$.
Let $\frakt_1, \dots \frakt_g$ be representatives for a set of generators for $\Cl(E)$ which are coprime to $\frakd_E$.  Let $n_i$ be the order of $\frakt_i$ in $\Cl(E)$ and write $\frakt_i^{n_i} = \theta_i \frako_E$.  By \cref{prop:vf}, for each $1 \le i \le g$ we must have $L _\psi = E((\pm \theta_i)^{\ell/n_i})$ for some choice of $\pm$ (which may depend upon $i$).

Assuming $\ell$ is coprime to $h_E$ we have the following necessary condition:

\begin{enumerate}[label={(Q\arabic*)}]
\item \label{cond:Q2}
if $\Cl(E)$ is not cyclic, there exists a collections of signs $\eps_1, \dots, \eps_g \in \{ \pm 1 \}$ such that $E((\eps_i \theta_i^\ell)^{1/n_i}) = E((\eps_j \theta_j^\ell)^{1/n_j})$ for all $1 \le i < j \le g$. 
\end{enumerate}

\subsection{Exponent 2 fields} \label{sec:quadexp2}
There are 56 known imaginary quadratic fields $E$ with (class group) exponent 2, with the discriminant of largest absolute value being $-5460$, and this list is complete under ERH \cite{weinberger}.  Of these, 18 have class group $C_2$, 24 have class group $C_2^2$, 13 have class group $C_2^3$, and 1 has class number $C_2^4$.

For those with class number $h_E > 2$, we check that \ref{cond:Q2} never holds for $\ell$ odd. (Note that we may assume $\ell = 1$ for this.)

Among those with class number 2, there are 15 with $\Delta_E \not \equiv 4 \mod 8$, namely those with
\[ -\Delta_E \in \{ 15, 24, 35, 40, 51, 88, 91, 115, 123, 187, 232, 235, 267, 403, 427 \}. \]

Assume $E$ is one of these 15 fields.  Since $g=1$, write $\theta = \theta_1$.  Then $L = E(\sqrt{\eta_\psi(\theta) \theta})$, so to determine the value fields for each $E$ we just compute a representative $\theta$ and $\eta(\theta)$ for each character $\eta$ of $(\frako_E/\frakd_E)^\times$ which restrict to $\chi_E$ on $(\Z/\Delta_E \Z)^\times$.
(Recall that \cref{cor:Lpsicount}, there is a unique value field $L$ when $\Delta_E$ is odd, and at most two value fields when $8 \mid \Delta_E$; we will see that there is a unique value field in all cases.)

When $\Delta_E$ is odd, $(\Z/\Delta_E\Z)^\times$ surjects onto $(\frako_E/\frakd_E)^\times$ and thus $\eta_\psi(\theta)$ is determined by $\chi_E$.  We compute the value fields $L = L_\psi$ for these $\Delta_E$ and record them in \cref{tab:quadodd}.  Since $L=EK$ is specified by $E$ and its real quadratic field $K$, we just record $\Delta_K$ in this table.

\begin{table} 
\centering
\begin{tabular}{c||c|c|c|c|c|c|c|c|c|c|c} $\Delta_E$ & $-15$ & $-35$ & $-51$ & $-91$ & $-115$ & $-123$ & $-187$ & $-235$ & $-267$ & $-403$ & $-427$ \\
\hline
$\Delta_K$ & $5$   & $5$   & $17$  & $13$  & $5$    & $41$   & $17$   & $5$    & $89$  & $13$ & $61$ \\
\end{tabular}
\caption{Value fields $L=EK$ when $\Delta_E$ is odd}
\label{tab:quadodd}
\end{table}

Now suppose $\Delta_E \equiv 0 \mod 8$.  Write $\Delta_E = -8 q$, where $q \in \{ 3, 5, 11, 29 \}$, and let $\frakq$ be the prime above $q$.  Now the image of $\theta$ in $(\frako_E/\frakd_E)^\times$ may not be in the image of $(\Z/\Delta_E\Z)^\times$.  Indeed, computation shows that in each case we can choose $\theta$ such that $\theta \equiv 2 \mod \frakq$ and $\theta \equiv -q \eps^2 \mod \frakp_2^5$, where $\eps = 1 + \sqrt{-2q}$ has order 4 mod $\frakp_2^5$.  Since $\eta$ is quadratic, one has that $\eta_2(\theta) = \eta_2(-q)$ where $\eta_2$ is $\eta$ composed with the projection to $(\frako_E/\frakp_2^5)^\times$.  Thus $\eta(\theta) = \chi_E(t)$, where $t \in (\Z/\Delta_E\Z)^\times$ such that $t \equiv 2 \mod q$ and $t \equiv -q \mod 8$.  (Hence the two possible $\eta$'s must will yield the same value field $L$.)

Explicitly for $q = 3, 5, 11, 29$, we can respectively take $t = 5, 27,  13 ,147$ and find that $\eta(\theta) = -{-1 \leg q}$.  We tabulate the value fields $L=EK$ via discriminants in \cref{tab:quadeven}.

\begin{table}
\centering
\begin{tabular}{c||c|c|c|c}  
$\Delta_E$ & $-24$ & $-40$ & $-88$ & $-232$ \\
\hline
$\Delta_K$ & $8$ & $5$ & $8$ & $29$  \\
\end{tabular}
\caption{Value fields $L=EK$ when $\Delta_E$ is even}
\label{tab:quadeven}
\end{table}

\subsection{Exponent 3 fields} \label{sec:qmce3}
There are 17 known imaginary quadratic fields $E$ with exponent 3, and there are no others if ERH holds \cite{EKN}.  All of these fields have negative prime discriminant, so $\Delta_E$ is odd.  One of them, $\Q(\sqrt{-4027})$, has class group $C_3^2$, and one checks this field does not satisfy \ref{cond:Q2} (assuming $3 \nmid \ell$, so one only needs to check $\ell = 1, 2$).  

  All the others have class group $C_3$, and their discriminants satisfy
\[ -\Delta_E = \{ 23, 31, 59, 83, 107, 139, 211, 283, 307, 331, 379, 499, 547, 643, 883, 907 \}. \]
For each these  class number 3 fields, there is a unique value field $L$ that occurs by \cref{cor:Lpsicount}, which we can construct in a similar way to the $\Delta_E$ odd case of \cref{sec:quadexp2}.  Namely, $L$ is of the form $E((\eta(\theta) \theta^\ell)^{1/3})$, and it only depends on $\ell \mod 3$.  In  \cref{tab:quade3} we indicate the value fields $L=EK$ for $\ell \equiv 1 \mod 6$, where $K$ is a totally real cubic field with the listed discriminant.  In all of these cases the discriminant uniquely identifies $K$ up to isomorphism.

\begin{table}
\centering
\begin{tabular}{c||c|c|c|c|c|c|c|c} 
$\Delta_E$ & $-23$ & $-31$ & $-59$ & $-83$ & $-107$ & $-139$ & $-211$ & $-283$ 
\\
\hline
$\Delta_K$ & $621$ & $837$ & $1593$ & $2241$ & $321$ & $3753$ & $5697$ & $7641$ \\
\end{tabular}

\vspace{8pt}
\begin{tabular}{c||c|c|c|c|c|c|c|c} 
$\Delta_E$ & $-307$ & $-331$ & $-379$ & $-499$ & $-547$ & $-643$ & $-883$ & $-907$ \\
\hline
$\Delta_K$ & $8289$ & $993$ & $10233$ & $13473$ & $14769$ & $1929$ & $23841$ & $24489$
\end{tabular}
\caption{Value fields $L=EK$ for $\ell \equiv 1 \mod 6$}
\label{tab:quade3}
\end{table}

\section{Value fields with higher order modular characters}
\label{sec:last}

Lastly, we consider $\psi$ such that $\eta_\psi$ has order $r > 2$ and $\eta_\psi^\Z = \chi_E$.  Combining the results below with those in the previous two sections will prove \cref{thm2}.

Assume $h_E > 1$, so that $L_0 = E(\zeta_r) \ne E$.  If $d = [L:E]$ is prime then $L=L_0$, and one needs the following:


\begin{enumerate}[label={(R\arabic*)}]
\item \label{eq:rootcond}
For each $1 \le i \le g$, there exists $\zeta \in \mu_L$ such that $(\zeta \theta_i^\ell)^{1/n_i} \in E(\zeta_r)$.
\end{enumerate}

\subsection{Exponent 2 fields}

Suppose $E$ has exponent 2 and $L = L_0 = E(\zeta_r)$ has degree 2 over $E$.  Then we can either take $r=4$ or $r=6$.

We check that \ref{eq:rootcond} holds only in the following situations:

\begin{enumerate}[label=(\roman*)]
\item
$r = 4$, $-\Delta_E \in \{ 20, 24, 40, 52, 88, 148, 232 \}$, and $\zeta$ is a primitive 4th root of unity; or

\item
$r = 6$: $-\Delta_E \in \{ 15, 24, 51, 123, 267 \}$, and $\pm \zeta$ is a primitive 3rd root of unity.
\end{enumerate}

All of these are class number 2 fields.  Take a representative $\frakt$ for the nontrivial element of $\Cl(E)$ which is coprime to $\Delta_E$.  Say $\frakt^2 = \theta \frako_E$.

First suppose we are in case (i).  Then 2 is ramified $E$, and let $\frakp_2$ denote the prime above 2.  To see whether a given $E$ in (i) yields a $\psi$ as above amounts to determining whether there exists a modulus $\frakm$ with character $\eta$ which agrees with $\chi_E$ and satisfies $\eta(\theta) = \pm i$.

If $\Delta_E \equiv 4 \mod 8$, then we compute in each case that $\theta^2 \equiv a \mod \frakd_E$, where $a \in (\Z/\Delta_E\Z)^\times$ is an element on which $\chi_E$ is $-1$.  Specifically, for $D = -20, -52, -148$, we take $\theta$ to be an element of $E - \Q$ of norm $9, 49, 361$ and find $a = -1, 23, -9$, respectively.  Hence for these $D$, either character $\eta$ of $(\frako_E/\frakd_E)^\times$ which restricts to $\chi_E$ on $(\Z/\Delta_E \Z)^\times$ sends $\theta$ to a primitive 4th root of unity, and thus it gives a Gr\"ossencharacter $\psi$ for any odd weight $\ell$ such that $L_\psi = E(i)$.  This gives CM forms $f \in S_{\ell + 1}(2 \Delta_E^2)$ with $K_f = \Q(\sqrt{|\Delta_E|})$.

Assume now $\Delta_E \equiv 0 \mod 8$.  We claim there is no modulus $\frakm$ with an order 4 character $\eta : (\frako_E/\frakm)^\times \to \C^\times$ such that $\eta$ restricted to $(\Z/\calN(\frakm))^\times$ is $\chi_E$ and $\eta(\theta) = \pm i$.  (Tacitly, when necessary we replace $\frakt$ and $\theta$ with representatives that are coprime to $\frakm$.)

Suppose that such a $\frakm$ and $\eta$ exist.  We may assume $\eta$ is primitive.  Then there exists a prime $\frakp \mid \frakm$ such that $\eta_\frakp(\theta) = \pm i$, where $\eta_\frakp$ is $\eta$ composed with the projection $(\frako_E/\frakm)^\times \to (\frako_E/\frakp^e)^\times$ and $e = v_\frakp(\frakm)$.  Let $\frakp$ be such a prime and $p$ be the prime below $\frakp$. Necessarily, $\theta$ is primitive mod $\frakp^e$, because if it were a nontrivial power of some $\beta$ mod $\frakp^e$, then $\eta_\frakp(\beta)$ and hence $\eta$ would have to have order strictly larger than 4.

First suppose that $\frakp = \frakp_2$ is the prime above 2.  We compute that the minimal $e$ such that $\theta \mod \frakp_2^e$ has order 4 is $e=6$.  However, in this case, $(\frako_E/\frakp_2^6)^\times \simeq (\Z/8\Z)^\times \times C_8$.  Hence $\eta(\theta)\in \{ \pm i \}$ would imply $\eta$ has order $\ge 8$, and thus $\frakp = \frakp_2$ is impossible.

So assume $p$ is odd.  Then as in the proof of \cref{prop:mintwist}, primitivity implies $e=1$.  Note that $\calN(\theta) = \calN(\frakt)^2$ is a square in $(\Z/p\Z)^\times$.  Since the norm map from $(\frako_E/\frakp)^\times \to (\Z/p\Z)^\times$ sends the subgroup of squares to the subgroup of squares, this means that $\theta$ is a square mod $\frakp$.  Thus $\theta$ is not primitive mod $\frakp$, and we have proved our claim by contradiction.

Now consider case (ii).  Note that 3 is ramified in $E$.  In each case, $\theta$ generates $(\frako_E/3\frako_E)^\times \simeq C_6$.  Let $\frakp_3$ be the prime in $E$ above 3.    Then for any odd $\ell$, there is a Gr\"ossencharacter $\psi$ mod $\frakm := \frakp_3 \frakd_E$ such that $L_\psi = E(\zeta_3)$ and $\eta_\psi^\Z = \chi_E$.  The associated CM form $f_\psi \in S_{\ell + 1}(3 N_E)$ has rationality field $K = \Q(\sqrt{\frac{|\Delta_E|}3})$.  For each of these $E$, this does not produce any value/rationality fields different from the case of quadratic modulus character in \cref{sec:quadexp2}, though we now get forms of level $3N_E$ instead of level $N_E$.

\subsection{Exponent 3 fields}
Suppose $E$ has exponent 3 and $L = L_0 = E(\zeta_r)$ has degree 3 over $E$.  Then $L$ must be a cyclotomic field of degree 6, and $E$ is the quadratic subfield.  Namely $L = \Q(\zeta_7)$ and $E = \Q(\sqrt{-7})$ or $L = \Q(\zeta_9)$ and $E = \Q(\sqrt{-3})$.  In both cases $h_E = 1$, violating our assumption.  (We already constructed such Gr\"ossencharacters in \cref{sec:h1d3}.)

\appendix
\renewcommand{\thesection}{A}

\section{Structure of dyadic modular quotients}
\label[app]{sec:append}

Let $E/\Q$ be a quadratic field, and $\frakp$ be a prime above $2$.  Here we recall some structure theory about $(\frako_E/\frakp^n)^\times$.  If $2$ splits in $E$, then $(\frako_E/\frakp^n)^\times \simeq (\Z/2^n \Z)^\times \simeq C_2 \times C_2^{n-2}$ for $n \ge 2$.  So assume $2$ is either inert or ramified in $E$.

Consider a local quadratic extension  $k/\Q_2$ with integer ring $\frako_k$ and prime ideal $\frakp_k$.  Write $\fraku_n = (1 + \frakp^n) \cap \frako_k^\times$ for the $n$-th unit group of $k$.  Put $f = f(k/\Q_2)$ and $e = e(k/\Q_2)$.  It is well known that when $k = E_{\frakp_2}$,
\[ G := (\frako_E/\frakp^n)^\times = (\frako_k/\frakp_k^n)^\times \simeq C_{2^f-1} \times \fraku_1/\fraku_{n}. \]

First we have the following facts about the 2-rank of $G$ (e.g., see \cite{nakagoshi}, which also treats higher-degree fields).

\begin{enumerate}[label=(\roman*)]
\item Suppose $n < 2e + 1$.  Then the 2-rank of $G$ is $\lceil \frac {n-1}2 \rceil f$.

If $2$ is inert in $E$, then $G$ has 2-rank 0 or 2 when $n=1$ or $n=2$, respectively.  Hence $G \simeq C_3$ or $G \simeq C_3 \times C_2^2$ in these cases.

If $2$ ramifies in $E$, then $G$ has 2-rank $0, 1, 1, 2$ for $n = 1, 2, 3, 4$, respectively. Necessarily $G$ is isomorphic to $C_1$, $C_2$, $C_4$, or $C_2 \times C_4$.

\item Suppose $n \ge 2e + 1$.  Then the 2-rank is $ef+1=3$.

If $2$ is inert in $E$, this means $G$ has 2-rank 3 when $n \ge 3$.  Necessarily $G \simeq C_3 \times C_2^2 \times C_4$ when $n=3$.

If $2$ ramifies in $E$, then $G$ has 2-rank 3 when $n \ge 5$.  Necessarily $G \simeq C_2^2 \times C_4$ when $n=5$.
\end{enumerate}

The following more precise information was obtained in \cite{ranum}; these cases respectively correspond to Type 9, Types 6--7, and Type 5 in \cite{ranum}.  Write $E = \Q(\sqrt d)$ where $d$ is squarefree. 

\begin{enumerate}[label=(\roman*),resume]

\item Suppose 2 is inert in $E$. Then
$G = (\frako_E/2^n \frako_E)^\times \simeq C_3 \times \langle -1 \rangle \times C_{2^{n-1}} \times C_{2^{n-2}}$ for $n \ge 2$.  Further, 5 generates a subgroup of index 2 of the third factor, and $3+2 \sqrt d$ generates the last factor.  The image of $(\Z/2^n\Z)^\times$ in $(\frako_E/\frakp_2^n)^\times$ is the subgroup generated by $-1$ and $5$.

\item Suppose $4 \parallel \Delta_E$.  The natural map $(\Z/4\Z)^\times \to (\frako_E/\frakp^n)^\times$ is injective if and only if $n \ge 3$.  Since $(\frako_E/\frakp^3)^\times \simeq C_4$, the element $-1 \in \frako_E/\frakp^n$ is a square for any $n \ge 3$.  

The precise structure of $G$ depends on $d$ mod 8 and whether $n = 2, 3, 4$ or $n \ge 5$.  However, we do not need more than is already stated in (i) and (ii) above, so we simply remark that for any $n\ge 2$ one has $G \simeq C_2 \times C_{2^a} \times C_{2^b}$ for some $a, b \ge 0$ such that $a+b = n-2$.  (If $n \ge 5$, then $|a - b| \le 1$.)

\item Suppose $8 \parallel \Delta_E$.  The natural map $(\Z/8\Z)^\times \to G$ is injective if and only if $n \ge 5$.  Moreover, $G \simeq \langle -1 \rangle \times C_{2^{r-2}}  \times C_{2^s}$, where $s = \lfloor \frac n2 \rfloor$ and $r = \lceil \frac n2 \rceil$ if $n \ge 4$.  In this case 5 generates the $C_{2^{r-2}}$ factor, and $1 + \sqrt d$ generates the $C_{2^s}$ factor.

We want to know when the image of any nontrivial element of $(\Z/8\Z)^\times$ is a square when $n \ge 5$.  Necessarily $-1$ is not a square, and 5 is a square if and only if $n \ge 7$.  Further $3 \equiv (-1) \cdot 5 \mod 8$ is never a square.
\end{enumerate}

%
%

\begin{bibdiv}
\begin{biblist}

\bib{boo}{thesis}{
   author={Boonsiriseth, Krit},
   title={Performance engineering of modular symbols},
   type={Master's Thesis},  
   organization={MIT},
   note={Available at \url{https://dspace.mit.edu/handle/1721.1/156828}},
   date={2024}
}

\bib{BK}{article}{
   author={Boyd, David W.},
   author={Kisilevsky, H.},
   title={On the exponent of the ideal class groups of complex quadratic
   fields},
   journal={Proc. Amer. Math. Soc.},
   volume={31},
   date={1972},
   pages={433--436},
   issn={0002-9939},
   review={\MR{0289454}},
   doi={10.2307/2037547},
}

\bib{BFGR}{article}{
   author={Bruin, Nils},
   author={Flynn, E. Victor},
   author={Gonz\'alez, Josep},
   author={Rotger, Victor},
   title={On finiteness conjectures for endomorphism algebras of abelian
   surfaces},
   journal={Math. Proc. Cambridge Philos. Soc.},
   volume={141},
   date={2006},
   number={3},
   pages={383--408},
   issn={0305-0041},
   review={\MR{2281405}},
   doi={10.1017/S0305004106009613},
}

\bib{CM}{unpublished}{
   author={Cowan, Alex},
   author={Martin, Kimball},
   title={Counting modular forms by rationality field},
   note={\emph{arXiv:2301.10357v2}},  
}

\bib{EKN}{article}{
   author={Elsenhans, Andreas-Stephan},
   author={Kl\"uners, J\"urgen},
   author={Nicolae, Florin},
   title={Imaginary quadratic number fields with class groups of small
   exponent},
   journal={Acta Arith.},
   volume={193},
   date={2020},
   number={3},
   pages={217--233},
   issn={0065-1036},
   review={\MR{4071803}},
   doi={10.4064/aa180220-20-3},
}

\bib{GLY}{article}{
   author={Gonz\'alez-Jim\'enez, Enrique},
   author={Lozano-Robledo, \'Alvaro},
   author={York, Benjamin},
   title={Models of CM elliptic curves with a prescribed $l$-adic Galois
   image},
   journal={J. Number Theory},
   volume={277},
   date={2025},
   pages={19--62},
   issn={0022-314X},
   review={\MR{4905487}},
   doi={10.1016/j.jnt.2025.03.001},
}

\bib{HB}{article}{
   author={Heath-Brown, D. R.},
   title={Imaginary quadratic fields with class group exponent 5},
   journal={Forum Math.},
   volume={20},
   date={2008},
   number={2},
   pages={275--283},
   issn={0933-7741},
   review={\MR{2394923}},
   doi={10.1515/FORUM.2008.014},
}

\bib{kenku}{article}{
   author={Kenku, M. A.},
   title={On the number of ${\bf Q}$-isomorphism classes of elliptic curves
   in each ${\bf Q}$-isogeny class},
   journal={J. Number Theory},
   volume={15},
   date={1982},
   number={2},
   pages={199--202},
   issn={0022-314X},
   review={\MR{0675184}},
   doi={10.1016/0022-314X(82)90025-7},
}

\bib{me:maeda}{article}{
   author={Martin, Kimball},
   title={An on-average Maeda-type conjecture in the level aspect},
   journal={Proc. Amer. Math. Soc.},
   volume={149},
   date={2021},
   number={4},
   pages={1373--1386},
   issn={0002-9939},
   review={\MR{4242297}},
   doi={10.1090/proc/15328},
}

\bib{mazur}{article}{
   author={Mazur, B.},
   title={Rational isogenies of prime degree (with an appendix by D.
   Goldfeld)},
   journal={Invent. Math.},
   volume={44},
   date={1978},
   number={2},
   pages={129--162},
   issn={0020-9910},
   review={\MR{0482230}},
   doi={10.1007/BF01390348},
}

\bib{nakagoshi}{article}{
   author={Nakagoshi, Norikata},
   title={The structure of the multiplicative group of residue classes
   modulo ${\germ p}\sp{N+1}$},
   journal={Nagoya Math. J.},
   volume={73},
   date={1979},
   pages={41--60},
   issn={0027-7630},
   review={\MR{0524007}},
}

\bib{ranum}{article}{
   author={Ranum, Arthur},
   title={The group of classes of congruent quadratic integers with respect
   to a composite ideal modulus},
   journal={Trans. Amer. Math. Soc.},
   volume={11},
   date={1910},
   number={2},
   pages={172--198},
   issn={0002-9947},
   review={\MR{1500859}},
   doi={10.2307/1988676},
}

\bib{ribet}{article}{
   author={Ribet, Kenneth A.},
   title={Galois representations attached to eigenforms with Nebentypus},
   conference={
      title={Modular functions of one variable, V},
      address={Proc. Second Internat. Conf., Univ. Bonn, Bonn},
      date={1976},
   },
   book={
      series={Lecture Notes in Math.},
      volume={Vol. 601},
      publisher={Springer, Berlin-New York},
   },
   date={1977},
   pages={17--51},
   review={\MR{0453647}},
}

\bib{ribet:korea}{article}{
   author={Ribet, Kenneth A.},
   title={Abelian varieties over $\bf Q$ and modular forms},
   conference={
      title={Modular curves and abelian varieties},
   },
   book={
      series={Progr. Math.},
      volume={224},
      publisher={Birkh\"{a}user, Basel},
   },
   isbn={3-7643-6586-2},
   date={2004},
   pages={241--261},
}

\bib{schutt}{article}{
   author={Sch\"utt, Matthias},
   title={CM newforms with rational coefficients},
   journal={Ramanujan J.},
   volume={19},
   date={2009},
   number={2},
   pages={187--205},
   issn={1382-4090},
   review={\MR{2511671}},
   doi={10.1007/s11139-008-9147-8},
}

\bib{shimura:nagoya}{article}{
   author={Shimura, Goro},
   title={On elliptic curves with complex multiplication as factors of the
   Jacobians of modular function fields},
   journal={Nagoya Math. J.},
   volume={43},
   date={1971},
   pages={199--208},
   issn={0027-7630},
   review={\MR{0296050}},
}

\bib{weinberger}{article}{
   author={Weinberger, P. J.},
   title={Exponents of the class groups of complex quadratic fields},
   journal={Acta Arith.},
   volume={22},
   date={1973},
   pages={117--124},
   issn={0065-1036},
   review={\MR{0313221}},
   doi={10.4064/aa-22-2-117-124},
}

\end{biblist}
\end{bibdiv}

\end{document}